\documentclass[12pt]{article}
\usepackage{amssymb,amsfonts,amsmath,amsthm,float}
\bibliographystyle{amsplain}
\usepackage{fontenc}
\usepackage[utf8]{inputenc}
\usepackage{tkz-euclide, sidecap}
\usepackage{pgf, rotating}
\usepackage{mathrsfs}
\usetikzlibrary{arrows}
 \usepackage{tikz}
\usetkzobj{all}
\usetikzlibrary{patterns, calc}
\usepackage{framed}
\usepackage{pbox}
\usepackage[normalem]{ulem}
\usepackage{color}
\binoppenalty=1000 \relpenalty=10000
\usepackage{enumerate}
\oddsidemargin -5pt \evensidemargin -5pt \topmargin -25pt
\headheight 12pt \headsep 25pt \footskip 30pt \textheight 9in
\textwidth 6.5in \columnsep .375in \columnseprule 0pt
\usepackage{comment}

%
\usepackage{color}
\usepackage{graphicx}

\newcommand{\sign}{\mbox{\rm \small sign}}

\newcommand{\witi}{\widetilde}

\newcommand{\ol}{\overline}
\newcommand{\odin}{\mbox {\bf 1}}

\newcommand{\zz}{{\mathbb Z}}

\newcommand{\cc}{{\mathbb C}}
\newcommand{\nn}{{\mathbb N}}

\newcommand{\rr}{{\mathbb R}}

\newcommand{\cala}{{\mathcal A}}

\newcommand{\cald}{{\mathcal D}}

\newcommand{\calk}{{\mathcal K}}

\newcommand{\veps}{\varepsilon}

\newcommand{\beq}{\begin{eqnarray*}}
\newcommand{\feq}{\end{eqnarray*}}
\newcommand{\beqn}{\begin{eqnarray}}
\newcommand{\feqn}{\end{eqnarray}}
\newcommand{\as}{\mbox{\rm a.\,s.}}

\newtheorem{theorem}{Theorem}
\makeatletter \@addtoreset{theorem}{section}\makeatother


\newtheorem{lemma}[theorem]{Lemma}
\newtheorem{assume}[theorem]{Assumption}

\newtheorem*{theorem*}{Theorem}

\newtheorem{proposition}[theorem]{Proposition}
\newtheorem{corollary}[theorem]{Corollary}
\newtheorem{remark}[theorem]{Remark}

\title{Random walk on the Poincar\'{e} disk induced by \\ a group of M\"{o}bius transformations}
\date{April 20, 2018; Revised: October 21, 2018}
\author{
 Charles McCarthy \thanks{Department of Mathematics, Iowa State University, Ames, IA 50011, USA; e-mail: lasker@iastate.edu}
 \qquad
 Gavin Nop\thanks{Department of Mathematics, Iowa State University, Ames, IA 50011, USA; e-mail: gnnop@iastate.edu}
 \and
Reza~Rastegar\thanks{Occidental Petroleum Corporation, Houston, TX 77046 and Departments of Mathematics and Engineering, University of Tulsa, OK 74104, USA - Adjunct Professor; e-mail:  reza\_rastegar2@oxy.com} \qquad
Alexander~Roitershtein \thanks{Department of Statistics, Texas A\&M University, College Station, TX 77843, USA; e-mail:  \qquad \qquad \qquad alexander@stat.tamu.edu}
}
\usepackage{fancyhdr}
\pagestyle{fancy}
\lhead{McCarthy et al.}
\rhead{Random walk on the Poincar\'{e} disk}
\begin{document}
\maketitle
\begin{abstract}
We consider a discrete-time random  motion, Markov chain on  the  Poincar\'{e}  disk. In the basic variant of the model a particle moves along certain circular arcs within the disk, its location is determined by a composition of random M\"{o}bius transformations. We exploit an isomorphism between the underlying group of M\"{o}bius transformations and $\rr$ to study the random motion through its relation to a one-dimensional random walk. More specifically, we show that key geometric characteristics of the random motion, such as Busemann functions and bipolar coordinates evaluated at its location, and hyperbolic distance from the origin, can be either explicitly computed or approximated in terms of the random walk. We also consider a variant of the model where the motion is not confined to a single arc, but rather the particle switches between arcs of a parabolic pencil of circles at random times.
\end{abstract}
{\em MSC2000: } Primary: 60J05, 51M10; Secondary: 60F15, 60F10.
\\
\noindent {\em Keywords:} random motion, random walks, Poincar\'{e} disk, random  M\"{o}bius transformations, iterated random functions, gyrotranslations, bipolar coordinates, limit theorems.

\section{Introduction}
\label{intro}
The aim of this paper is to introduce a certain random motion on the Poincar\'{e} disk and study its basic asymptotic properties. This motion is induced by an iterated random functions (IRF) model, where the functions are M\"{o}bius transformations drawn at random from the family introduced in Section~\ref{prelim}. The majority of our work is thus the study of this specific family of M\"{o}bius transformations and their associated IRF model. The primary feature of our focus is the convergence of iterations. Conditions for the convergence of a sequence of compositions of M\"{o}bius transformations are well-known \cite{m-irf,keym}. A standard condition implies that almost every member in the underlying family of transformations should have a common attractive fixed point. In fact, under mild technical assumption this condition is necessary and sufficient for an IRF model to converge. The underlying family of M\"{o}bius transformations considered in this paper can be characterized by the assumption that all the transformations are one-to-one on the unit disk and have two common fixed points along with a natural maximality property (see Section~\ref{prelim}, in particular Proposition~\ref{observe1}). 
\par 
It turns out that after a proper re-parametrization and using some intrinsic coordinates, various features of our studied IRF can be quantified in terms of a one-dimensional random walk. The relation between the induced motion in the Poincare disk and the random walk on $\rr$ is the key tool we use to study the former. The random motion can be seen as a mixture of basic IRFs in a fashion similar to the way a random walk on $\zz^2$ can be perceived as a mixture of random walks on $\zz.$ Furthermore, an orbit of each of the basic IRF model is contained within an arc in the unit disk connecting two opposite points on the perimeter of the unit disk which are common fixed points of the underlying M\"{o}bius transformations (dotted horizontal lines in Fig.~\ref{fig0} below). The vertical arcs are geodesic lines in the Poincar\'{e} disk model whereas the horizontal are not. In the second part of the paper we consider a genuinely two-dimensional walk on the Poincar\'{e} disk where (similarly to the classical simple random walk on $\zz^2$) at each jump the random walk moves along either dotted or dashed circles and thus preserves one of the two intrinsic coordinates, current direction being decided by a regular coin-tossing procedure. Remark that typically, a discrete-time random walk on a hyperbolic space \cite{hrm, ks, neq} or, more generally, on a Riemanian manifold \cite{jorge,pinsky} is a motion along geodesic lines. 
\\
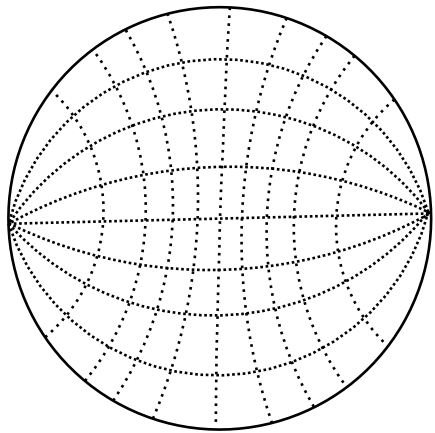
\begin{figure}[!hb]
\begin{center}
\begin{turn}{-30}
\begin{tikzpicture} [scale = 0.7]
\draw [line width=1pt] (-1.12,0.89) circle (4.013277961965755cm);
\draw [line width=1pt,dash pattern=on 1pt off 1pt] (-4.5432974634567795,-1.2046203657680743)-- (2.3075343970440345,2.9776800418359097);
\draw [shift={(2.888431895775272,-5.676025934506512)},line width=1pt,dash pattern=on 1pt off 1pt]  plot[domain=1.637822793434394:2.59995036839282,variable=\t]({1*8.673181021344863*cos(\t r)+0*8.673181021344863*sin(\t r)},{0*8.673181021344863*cos(\t r)+1*8.673181021344863*sin(\t r)});
\draw [shift={(0.3309117026156172,-1.5167803593575142)},line width=1pt,dash pattern=on 1pt off 1pt]  plot[domain=1.166562704597435:3.077636788742166,variable=\t]({1*4.884194801215645*cos(\t r)+0*4.884194801215645*sin(\t r)},{0*4.884194801215645*cos(\t r)+1*4.884194801215645*sin(\t r)});
\draw [shift={(-0.5558387159915673,-0.07467532154864191)},line width=1pt,dash pattern=on 1pt off 1pt]  plot[domain=0.8264715007811323:3.417727992558468,variable=\t]({1*4.144466559847346*cos(\t r)+0*4.144466559847346*sin(\t r)},{0*4.144466559847346*cos(\t r)+1*4.144466559847346*sin(\t r)});
\draw [shift={(-2.9084745154541007,3.751370730804398)},line width=1pt,dash pattern=on 1pt off 1pt]  plot[domain=4.393760447518583:6.133624353000603,variable=\t]({1*5.218667820490378*cos(\t r)+0*5.218667820490378*sin(\t r)},{0*5.218667820490378*cos(\t r)+1*5.218667820490378*sin(\t r)});
\draw [shift={(-1.7541942777881347,1.8741871277835132)},line width=1pt,dash pattern=on 1pt off 1pt]  plot[domain=-2.3068635982392256:0.26787778439923976,variable=\t]({1*4.154293220591969*cos(\t r)+0*4.154293220591969*sin(\t r)},{0*4.154293220591969*cos(\t r)+1*4.154293220591969*sin(\t r)});
\draw [shift={(-6.865410056788786,-2.246793069183498)},line width=1pt,dotted]  plot[domain=-0.16020842873872088:1.1596885902846057,variable=\t]({1*5.13990934404543*cos(\t r)+0*5.13990934404543*sin(\t r)},{0*5.13990934404543*cos(\t r)+1*5.13990934404543*sin(\t r)});
\draw [shift={(144.40497742708402,78.57719125607755)},line width=1pt,dotted]  plot[domain=3.6076161548313506:3.6562740347475775,variable=\t]({1*164.96308296225456*cos(\t r)+0*164.96308296225456*sin(\t r)},{0*164.96308296225456*cos(\t r)+1*164.96308296225456*sin(\t r)});
\draw [shift={(-5.835374337263382,-1.6693357214578461)},line width=1pt,dotted]  plot[domain=-0.3413313931989901:1.3358730295534569,variable=\t]({1*3.1561208595190156*cos(\t r)+0*3.1561208595190156*sin(\t r)},{0*3.1561208595190156*cos(\t r)+1*3.1561208595190156*sin(\t r)});
\draw [shift={(-11.013438819784929,-4.354498954772841)},line width=1pt,dotted]  plot[domain=0.12209367085772392:0.8527775761337828,variable=\t]({1*10.778138359322298*cos(\t r)+0*10.778138359322298*sin(\t r)},{0*10.778138359322298*cos(\t r)+1*10.778138359322298*sin(\t r)});
\draw [shift={(-7.75973732824131,-2.656969766613228)},line width=1pt,dotted]  plot[domain=-0.07031859914041849:1.0515874226105568,variable=\t]({1*6.636121301158943*cos(\t r)+0*6.636121301158943*sin(\t r)},{0*6.636121301158943*cos(\t r)+1*6.636121301158943*sin(\t r)});
\draw [shift={(-5.128431895775272,7.456025934506511)},line width=1pt,dash pattern=on 1pt off 1pt]  plot[domain=4.779415447024187:5.741543021982613,variable=\t]({1*8.673181021344863*cos(\t r)+0*8.673181021344863*sin(\t r)},{0*8.673181021344863*cos(\t r)+1*8.673181021344863*sin(\t r)});
\draw [shift={(3.5953743372633817,3.449335721457846)},line width=1pt,dotted]  plot[domain=2.800261260390803:4.47746568314325,variable=\t]({1*3.156120859519015*cos(\t r)+0*3.156120859519015*sin(\t r)},{0*3.156120859519015*cos(\t r)+1*3.156120859519015*sin(\t r)});
\draw [shift={(8.773438819784928,6.13449895477284)},line width=1pt,dotted]  plot[domain=3.2636863244475167:3.9943702297235757,variable=\t]({1*10.778138359322297*cos(\t r)+0*10.778138359322297*sin(\t r)},{0*10.778138359322297*cos(\t r)+1*10.778138359322297*sin(\t r)});
\draw [shift={(5.519737328241309,4.4369697666132275)},line width=1pt,dotted]  plot[domain=3.0712740544493746:4.19318007620035,variable=\t]({1*6.636121301158943*cos(\t r)+0*6.636121301158943*sin(\t r)},{0*6.636121301158943*cos(\t r)+1*6.636121301158943*sin(\t r)});
\draw [shift={(4.625410056788786,4.026793069183498)},line width=1pt,dotted]  plot[domain=2.9813842248510722:4.301281243874398,variable=\t]({1*5.13990934404543*cos(\t r)+0*5.13990934404543*sin(\t r)},{0*5.13990934404543*cos(\t r)+1*5.13990934404543*sin(\t r)});
\end{tikzpicture}
\end{turn}
\caption{
Two orthogonal pencils of coaxial circles. Dashed circles are centered on the $y$ axis and go through the points $-1$ and $1.$ Dotted circles are centered on the $x$ axis and intersect dashed circles at the right angle. }
\label{fig0}
\end{center}
\end{figure}
\par
 The paper is structured as follows. In Section~\ref{prelim} we introduce a family (an abelian subgroup) of M\"{obius} transformations that governs the ``horizontal" portion of the random motion. In Sections~\ref{loco} and~\ref{converge} we consider the basic IRF that represents the horizontal part of the random motion in our model. The functions in the IRF model are M\"{o}bius transformations drawn at random from the family introduced in Section~\ref{prelim}. To study the IRF we introduce in Section~\ref{loco} so-called bipolar coordinates whose isoparametric curves are represented by two orthogonal sets (pencils) of circular arcs (see Fig.~\ref{fig0}). In Section~\ref{loco} we compute certain geometric characteristics of the auxiliary IRF process, and in Section~\ref{converge} we discuss the rate of its attraction to the disk boundary in various regimes. In Section~\ref{apencils} we apply these results to study a genuinely two-dimensional motion on $\cald$ that is in some natural fashion induced by the above auxiliary process. These are the main results of this paper and are concerned with the asymptotic behavior of the random motion. Finally, in Section~\ref{simul} we present some numerical simulations for the motion introduced in Section~\ref{apencils}.

\section{An abelian group of M\"{o}bius transformations}
\label{prelim}
In this section we introduce a family of M\"{o}bius transformations that govern the ``horizontal" (i.\,e., along dashed arcs in Fig.~\ref{fig0} above) motion in our model.
\par
Let $\cald$ be open unit disk with center at $0$ in the complex plane $\cc$. Recall that the Poincar\'{e} distance $d_{\mathfrak p}(z,w)$ between two points $z,w\in\cald$ is given by
\beqn
\label{poincare}
d_{\mathfrak p}(z,w)=\log\left(\frac{1+\rho_{z,w}}{1-\rho_{z,w}}\right)
=\mbox{\rm \small arctanh}\,\rho_{z,w},
\qquad \mbox{\rm where}~\rho_{z,w}:=\Bigr|\frac{z-w}{1-\bar zw}\Bigr|.
\feqn
The choice of the constant in front of the logarithm is a matter of convention, it is often set to be $\frac{1}{2}.$ The Poincar\'{e} disk $(\cald, d_{\mathfrak p})$ is a canonical model of the hyperbolic space. Let $\ol \cald$ be the closure of $\cald$ in $\cc$ and $\partial \cald $ be its boundary, a unit circle. Let $\alpha\in\partial \cald$ be given, it will remain fixed throughout the paper. 
\par 
We will be concerned with M\"{o}bius transformations $q_{x,\alpha}:\ol \cald \to \ol \cald$
defined for $\alpha \in \partial \cald$ and real $x\in (-1,1)$ by
\beqn
\label{qxalpha}
q_{x,\alpha}(z)=\frac{z+x\alpha}{1+x\ol \alpha z}, \qquad z\in\ol \cald.
\feqn
In the language of \cite{gyro1,gyro}, the kind of M\"{o}bius transformations that appears in \eqref{qxalpha} (namely, $z\oplus w:=\frac{z+w}{1+\ol wz}$ with $w=x\alpha$) is called a gyrotranslation. This particular class of M\"{obius} transformations plays a fundamental role in hyperbolic geometry and its applications (for instance, notice that $\rho_{z,w}$ in \eqref{poincare} is $|z\oplus (-w)|$).
\par
Remark that for any $\alpha \in \partial \cald$ and $x\in (-1,0) \cup (0,1),$
\begin{enumerate}[(i)]
\item $q_{x,\alpha}$ is a holomorphic automorphism (one-to-one and onto map) of $\cald.$
\item $q_{x,a}$ stabilizes the boundary $\partial \cald.$ That is, $q_{x,\alpha}(\partial \cald)=\partial \cald.$
\item $q_{x,\alpha}$ has two fixed points, $-\alpha$ and $\alpha.$ That is, $q_{x,\alpha}(-\alpha)=-\alpha$ and $q_{x,\alpha}(\alpha)=\alpha.$
\end{enumerate}
In fact, any transformation of $\cald$ that has property (i) is in the form $\beta q_{x,\alpha}$ for some $\alpha,\beta\in\partial\cald,$ and hence posses property (ii) as well. Let $Q_\alpha:=\{q_{x,\alpha}:x\in(-1,1)\}.$ Note that $Q_\alpha$ is closed under finite compositions of its elements. More precisely, for any $x_1,x_2\in(-1,1),$ there is a unique $x\in (-1,1)$ such that
\beqn
\label{fcomp}
q_{x_1,\alpha}\circ q_{x_2,\alpha}=
q_{x,\alpha},\qquad \mbox{\rm where}~\frac{1+x}{1-x}=\frac{1+x_1}{1-x_1}\cdot \frac{1+x_2}{1-x_2}.
\feqn
Our results in the next sections rely on the asymptotic properties of iterated compositions of random elements of $Q_\alpha.$ Property (iii) which guarantees that all transformations in the family $Q_\alpha$ have a common fixed point is the main reason why we exclude the composition with a rotation from our consideration.  It is well-known that a sequence of iterated M\"{o}bius transformation necessarily diverges unless all the transformations have a common fixed point w.p.1 (with probability one) \cite{m-irf,keym}. The feature of $Q_\alpha$'s that originally spiked our interest in these collections of M\"{o}bius transformations can be described as follows:
\begin{proposition}
\label{observe1}
$Q_\alpha$ for $\alpha\in\partial\cald$ is a maximal family of transformations on $\ol \cald$ such that its elements are holomorphic bijections on $\cald,$ all having two common fixed points. Conversely, any such maximal family is either $Q_\alpha$ for some $\alpha \in\partial\cald$ or consists of exactly one element. 
\end{proposition}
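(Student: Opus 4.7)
The plan is to prove the two implications separately: first, that $Q_\alpha$ is itself a maximal family of the stated type with common fixed points $\pm\alpha$; second, that any maximal family of holomorphic disk bijections with two common fixed points takes the form $Q_\alpha$, up to the singleton exception.

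For the maximality of $Q_\alpha$, properties (i)--(iii) already place it inside the target class, so the work reduces to showing that any holomorphic automorphism $T$ of $\cald$ fixing both $-\alpha$ and $\alpha$ already lies in $Q_\alpha$. Starting from the standard representation of a disk automorphism
\[
T(z) = \beta\,\frac{z-a}{1-\bar a z}, \qquad |\beta|=1,\ |a|<1,
\]
I would impose the two fixed-point conditions and divide them, which after a clean expansion yields the relation $\bar a\,\alpha^2 = a$. Since $|\alpha|=1$, this forces $a = r\alpha$ for some real $r\in(-1,1)$, and back-substitution then gives $\beta=1$, so that $T = q_{-r,\alpha}\in Q_\alpha$.

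For the converse, let $F$ be a maximal family of holomorphic disk bijections with two common fixed points $p,q$. Each element extends to a M\"obius transformation of $\cc\cup\{\infty\}$ whose two fixed points lie either on $\partial\cald$ (hyperbolic) or form an inverse pair $\{w,1/\bar w\}$ with $w\in\cald$ and $1/\bar w\notin\ol\cald$ (elliptic); in the elliptic case only one of the two fixed points is visible on $\ol\cald$, so we must be in the hyperbolic case with $p,q\in\partial\cald$. When $\{p,q\}=\{-\alpha,\alpha\}$, the forward direction identifies $F$ with $Q_\alpha$ by maximality, while the ``exactly one element'' alternative absorbs degenerate configurations admitting no nontrivial member. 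The main obstacle I expect is the case where $p,q\in\partial\cald$ but are not antipodal; there a parallel direct calculation still produces a nontrivial one-parameter group of disk automorphisms fixing both, which I would reconcile with the stated dichotomy by conjugating $F$ via a disk automorphism that moves $\{p,q\}$ to an antipodal position, thereby identifying $F$ with the corresponding $Q_\alpha$ up to a change of coordinates on $\cald$.
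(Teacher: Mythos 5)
Your first part is correct and cleaner than the paper's: writing a general disk automorphism $T(z)=\beta\frac{z-a}{1-\bar a z}$, imposing $T(\pm\alpha)=\pm\alpha$, and solving for $a$ and $\beta$ indeed shows that every automorphism fixing $\pm\alpha$ lies in $Q_\alpha$. This establishes the maximality of $Q_\alpha$.

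The converse, however, has a genuine gap, and you have actually put your finger on it without recognizing it as such. You correctly observe that if the two common fixed points $p,q$ lie on $\partial\cald$ but are \emph{not} antipodal, there is still a nontrivial one-parameter group of disk automorphisms fixing both (pass to the half-plane model with $p\mapsto\infty,\ q\mapsto 0$; the dilations $w\mapsto\lambda w$ pull back to such a group). You then propose to ``reconcile'' this with the stated dichotomy by conjugating the family to move $\{p,q\}$ to an antipodal pair. But conjugation only shows your family is \emph{conjugate} to some $Q_\alpha$, not \emph{equal} to it. The proposition asserts set-theoretic equality: the maximal family must literally be $Q_\alpha$ for some $\alpha\in\partial\cald$, i.e., must consist of the maps $q_{x,\alpha}(z)=\frac{z+x\alpha}{1+x\bar\alpha z}$. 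An automorphism fixing, say, $1$ and $i$ is not of this form for any $\alpha$. So the non-antipodal hyperbolic case is not an ``obstacle to be reconciled'' but a case where the dichotomy as stated fails outright, and the conjugation trick cannot repair your argument.

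For comparison, the paper's proof takes a different route: it writes an arbitrary automorphism in the normal form $f=\beta q_{x,\alpha}$, lets $z=\alpha\omega$ be a non-zero fixed point, and reduces to the equation $\omega=\beta\,\frac{\omega+x}{1+x\bar\omega}$, from which it deduces $\beta=1$ whenever $|\omega|=1$. But if you redo that substitution, you find $\bar\alpha z=\omega$ (not $\bar\omega$), so the denominator should be $1+x\omega$; with the corrected equation $\omega(1+x\omega)=\beta(\omega+x)$ and $|\omega|=1$, $\beta$ is forced to equal $1$ only when $\omega=\pm1$, i.e., precisely in the antipodal case. In short, the paper's computation contains a slip exactly where your case analysis runs into trouble, and the two routes fail at the same place. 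A correct statement would either restrict attention to antipodal fixed-point pairs from the outset (which is all that is used in the rest of the paper, since $Q_\alpha$ is fixed throughout), or weaken ``is $Q_\alpha$'' to ``is conjugate to some $Q_\alpha$ by a disk automorphism.'' As written, neither your proposal nor the paper's argument establishes the converse half of the proposition.
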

\begin{proof}
Any holomorphic bijections on $\cald$ has the form $f(z)=\beta q_{x,\alpha}=\beta \frac{z+x\alpha}{1+x\ol \alpha z}$ for some $\alpha,\beta \in \partial \cald$ and $x\in (-1,1).$
Our goal is to show that either $\beta=1$ or there is no other holomorphic bijections on $\cald$ having the same two fixed point as $f.$  Let $z=\alpha \omega$ be a non-zero fixed point of $f.$  Then
\beqn
\label{om}
\omega =\beta \frac{\omega+x}{1+x\ol \omega}.
\feqn 
Since $|\beta|=1$ this implies that $\bigl|\omega +x|\omega|^2\bigr|=|\omega +x|.$ It is easy to see that this identity is only possible if either $|\omega|=1$ or $x=0$ or $x=-\frac{2Re(\omega)}{1+|\omega|^2}.$ If $x=0$ then $\beta=1$ and hence $f\in Q_\alpha.$ If $|\omega|=1$ then \eqref{om} yields $\omega+x=\beta(\omega+x)$ and thus $\beta=1$ and $f\in Q_\alpha.$ Finally, if $x=-\frac{2Re(\omega)}{1+|\omega|^2}$ and $|\omega|\neq 1$ then $x$ and due to \eqref{om} also $\beta$ are uniquely determined by $\omega,$ implying that $f$ is a unique holomorphic bijections on $\cald$ having $z=\alpha \omega$ as its fixed point.  
\end{proof}
In view of \eqref{fcomp}, the family of transformations $Q_\alpha$ is more conveniently parametrized using the parameter
\beqn
\label{poi}
\gamma(x)=\log \Bigl(\frac{1+x}{1-x}\Bigr)=\sign(x)\cdot d_{\mathfrak p}(0,x\alpha)
\feqn
rather than $x$ (see \eqref{grap} below). Note that $\gamma:(-1,1)\to\rr$ is a one-to-one transformation. In addition to the representation as the right-most expression in \eqref{poi}, another illuminating interpretation of $\gamma(x)$ is that
\beqn
\label{der}
e^{\gamma(x)}=\frac{1+x}{1-x}=\frac{d}{dz}q_{x,\alpha}(-\alpha)=\Bigl(\frac{d}{dz}q_{x,\alpha}(\alpha)\Bigr)^{-1}.
\feqn
In what follows, for a given $\alpha\in\partial\cald,$  we denote
\beqn
\label{ggamma}
g_\gamma:=q_{x,\alpha}\qquad \mbox{\rm if}\qquad \gamma=\log \Bigl(\frac{1+x}{1-x}\Bigr).
\feqn
In this notation \eqref{fcomp} reads
\beqn
\label{grap}
g_{\gamma_1}\circ g_{\gamma_2}=g_{\gamma_2}\circ g_{\gamma_1}=g_{\gamma_1+\gamma_2}.
\feqn
Thus
\beqn
\label{ga}
G_\alpha:=\{g_\gamma:\gamma\in\rr\}
\feqn
form an abelian group of transformations on $\ol \cald.$
\par
Let $x=(x_n)_{n\in\nn}$ be a stationary and ergodic sequence of random variables valued in the interval $(-1,1).$ Specific assumptions on the distribution of $x$ will be made in Section~\ref{converge}.
Let
\beq
\rho_n=\frac{1+x_n}{1-x_n}\qquad \mbox{\rm and}\qquad \gamma_n=\log \rho_n.
\feq
Consider a (generalized) random walk on $\rr$ defined by the partial sums
of the sequence $\gamma_n:$
\beqn
\label{rrw}
\omega_0:=0\qquad \mbox{\rm and}\qquad \omega_n:=\sum_{i=1}^n \gamma_i,\quad  n\in\nn.
\feqn
For a comprehensive treatment of regular random walks on $\rr$ (partial sums of i.\,i.\,d. random variables) see, for instance, monographs \cite{borovs, budapest}. For recent results and literature survey on partial sums of dependent real-valued variables see, for instance, \cite{clt4,mclt,clt5}.
\par
The random walk $\omega_n$ induces a random walk on the elements of the group $G_\alpha$:
\beq
U_n:=g_{\omega_n},\qquad n\in\zz_+.
\feq
Here and thereafter, $\zz_+$ stands for the set of non-negative integers $\nn\cup\{0\}.$
\par
It follows from \eqref{grap} that for any permutation $\sigma:\{1,2,\ldots,n\}\to \{1,2,\ldots,n\},$
\beqn
\label{permute}
U_n=g_{\gamma_{\sigma(1)}}\circ g_{\gamma_{\sigma(2)}}\circ \ldots \circ g_{\gamma_{\sigma(n)}},
\feqn
and
\beqn
\label{r1}
U_n(z)=\frac{z+y_n\alpha}{1+y_n\bar \alpha z},
\feqn
with $y_n\in(-1,1)$ such that
\beqn
\label{yrho}
\frac{1+y_n}{1-y_n}=\prod_{k=1}^n \rho_k=e^{\omega_n}.
\feqn
In view of \eqref{der} and \eqref{permute}, the identity \eqref{yrho} can be thought as a direct implication of the chain rule formula $U_n'(\alpha)=g_{\gamma_n}'(\alpha)\cdot U_{n-1}'(\alpha).$
\par
We observe in Lemma~\ref{cinv} below that the entire orbit $\{U_n(z):n\in\nn\}$ lies on the circular arc going through the poles $-\alpha,\alpha,$ and  the initial position $z\in\ol \cald.$ 

\section{Locus of the orbits $\{U_n(z)\}_{n\in\nn}$ in the hyperbolic disk}
\label{loco}
The goal of this section is to explicitly relate certain geometric characteristics of $U_n(z)$ to the random walk $\omega_n.$ The relation can be subsequently exploited in order to deduce the asymptotic behavior of $U_n(z)$ from standard limit theorems for $\omega_n.$
\par
The main results of this section are Lemma~\ref{cinv} asserting that the orbit $\bigl(U_n(z)\bigr)_{n\in\zz_+}$ belongs to a circle which is invariant under the transformations of group $G_\alpha,$ Lemma~\ref{pbuz} that can serve, for instance, to calculate the rate of convergence of a transient process $U_n$ to the boundary (see Theorem~\ref{escape} in Section~\ref{converge}), and Lemma~\ref{bipo} that establishes the location of $U_n(z)$ in terms of certain, intrinsic to the problem, bipolar coordinates (for an example of application see Proposition~\ref{plln} in Section~\ref{converge}).
\par
Let $\ol l_\alpha$ and $l_\alpha$ denote, respectively, the diameter of $\cald$ connecting $\alpha$ and $-\alpha$ and its interior. That is,
\beqn
\label{la}
l_\alpha=\{z\in\cald: z=x\alpha~\mbox{\rm for some}~x\in(-1,1)\}\qquad \mbox{\rm and}\qquad \ol l_\alpha=l_\alpha \cup \{-\alpha,\alpha\}.
\feqn
Note that $\ol l_\alpha$ is invariant under $U_n$ for all $n\in\zz.$  In fact,
\beqn
\label{lal}
U_n(x\alpha)=\alpha \frac{x+y_n}{1+xy_n},\qquad x\in [-1,1].
\feqn
In the general case we have the following lemma:
\begin{lemma}
\label{cinv}
For an arbitrary $z\in\ol \cald\backslash\{-\alpha,\alpha\},$ the orbit $\{U_n(z)\}_{n\in\nn}$ lies entirely on the circle $H_z$ through the three points $\alpha,-\alpha,$ and $z.$
\end{lemma}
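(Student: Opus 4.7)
The statement amounts to the claim that each $U_n$, being an element of the one-parameter group $G_\alpha$, preserves every generalized circle passing through the two common fixed points $\alpha$ and $-\alpha$; combined with $z\in H_z$, this immediately yields $U_n(z)\in H_z$ for all $n$. Since $U_n$ is a single M\"obius transformation (not a family), the heart of the matter is a one-map invariance check.

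My plan is to verify this via the classical concyclicity criterion: four distinct points of the extended plane lie on a common generalized circle if and only if their cross-ratio is real. Using the explicit formula \eqref{r1} for $U_n$ with real parameter $y_n\in(-1,1)$, together with $\alpha\bar\alpha=1$, a direct simplification yields
\[
U_n(z)-\alpha=\frac{(1-y_n)(z-\alpha)}{1+y_n\bar\alpha z},\qquad U_n(z)+\alpha=\frac{(1+y_n)(z+\alpha)}{1+y_n\bar\alpha z},
\]
so the cross-ratio
\[
\bigl(U_n(z),z;\alpha,-\alpha\bigr)=\frac{U_n(z)-\alpha}{U_n(z)+\alpha}\cdot\frac{z+\alpha}{z-\alpha}
\]
collapses to $\frac{1-y_n}{1+y_n}$, which is real because $y_n$ is. Since three distinct points determine $H_z$ uniquely, this proves $U_n(z)\in H_z$.

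A more conceptual derivation that I would mention as a remark is to conjugate $G_\alpha$ by the M\"obius map $M(z)=(z-\alpha)/(z+\alpha)$, which sends $\alpha$ to $0$ and $-\alpha$ to $\infty$. A short calculation gives $M\circ g_\gamma\circ M^{-1}(w)=e^{-\gamma}w$, so the conjugated group is the group of positive real dilations; these manifestly preserve every line through the origin, and pulling back by $M^{-1}$ this amounts to the invariance of each generalized circle through $\pm\alpha$ under $G_\alpha$. The degenerate subcase $z\in\ol l_\alpha$, in which $H_z$ reduces to the diameter $\ol l_\alpha$, is already covered by \eqref{lal} and also fits the framework above under the convention that lines are generalized circles. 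I do not anticipate any real obstacle, as either route reduces to a one-line identity; the only work is in choosing the cleanest formalization.
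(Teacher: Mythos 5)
Your proof is correct, and both of your routes work. However, it is worth noting that the paper's \emph{main} proof of Lemma~\ref{cinv} takes a different tack: it observes that $U_n(z)=T_z(y_n\alpha)$ where $T_z(u)=\alpha^2\frac{z+u}{\alpha^2+zu}$ is a single M\"obius transformation independent of $n$, so the entire orbit is contained in $T_z(l_\alpha)$, which is a generalized circle; the identifications $T_z(\pm\alpha)=\pm\alpha$ and $T_z(0)=z$ then pin it down as $H_z$. Your cross-ratio computation, by contrast, is exactly the paper's \eqref{curious} combined with the parametric description \eqref{circle} of $H_z$, and this is precisely the ``alternative short proof'' the authors mention in Remark~\ref{circles}(ii) without spelling it out. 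What the $T_z$ route buys is a byproduct the paper leans on later (the parametrization $\hat\tau_z$ of $H_z\cap\cald$ in Remark~\ref{circles}(i)); what your cross-ratio route buys is a one-line verification that avoids discussing images of lines under M\"obius maps. Your conjugation remark, $M\circ g_\gamma\circ M^{-1}(w)=e^{-\gamma}w$ with $M(z)=(z-\alpha)/(z+\alpha)$, is also correct and is arguably the cleanest conceptual explanation, though the paper does not take that route explicitly.
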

\begin{remark}
\label{d}
If $z\in l_\alpha,$ then the circle $H_z$ is degenerate according to \eqref{lal}. Namely, it is the straight line that contains the segment $l_\alpha.$
\end{remark}
\begin{proof}[Proof of Lemma~\ref{cinv}]
For $z\in l_\alpha$ the result follows directly from \eqref{lal}. For $z\in\ol \cald\backslash \ol l_\alpha,$ letting $u_n=y_n\alpha,$ we get $U_n(z)=\alpha^2\frac{z+u_n}{\alpha^2+zu_n}.$ Thus for a fixed $z\in\ol\cald\backslash \ol l_\alpha,$ the orbit
$\{U_n(z)\}_{n\in\zz_+}$ lies entirely within the image of $l_\alpha$ under the M\"{o}bius transformation
\beqn
\label{tea}
T_z(u):=\alpha^2\frac{z+u}{\alpha^2+zu}.
\feqn
The image of a line under a M\"{o}bius transformation is either a line or a circle. The result then follows from the observation that $T_z(-\alpha)=-\alpha,$ $T_z(\alpha)=\alpha,$ and $T_z(0)=z.$
\end{proof}
\begin{remark}
\label{circles}
\item[(i)] The proof of Lemma~\ref{cinv} which is given above suggests the following parametrization of the arc $H_z\cap \cald.$ Recall that the arc is the image of the diameter $l_\alpha$ under the transformation $T_z.$ Thus it is natural to parametrize it by associating a point $w\in H_z\cap \cald$ with the real parameter $\ol \alpha T^{-1}_z(w),$ or perhaps with $\gamma\bigl( \ol \alpha T^{-1}_z(w)\bigr),$ where $\gamma:(-1,1)\to\rr$ is defined in \eqref{poi}. The former variation is essentially (up to a constant) the chord-length parametrization of the arc $H_z\cap \cald $ in the language of \cite{bp1}. For the sake of technical convenience, we choose the latter variant. More specifically, define $\hat \tau_z : \rr:\to H_z\cap \cald$ by
\beqn
\label{bpp}
\hat \tau_z(\delta)=T_z\bigl(\gamma^{-1}(\delta)\cdot \alpha\bigr).
\feqn
Notice that due to the isometry property of M\"{o}bius transformations, for any \beq \delta_1=\gamma(x_1)=\log\Bigl(\frac{1+x_1}{1-x_1}\Bigr)\quad \mbox{\rm and} \quad \delta_2 =\gamma(x_2)=\log\Bigl(\frac{1+x_2}{1-x_2}\Bigr),\qquad x_1,x_2\in (-1,1),
\feq
we have
\beq
d_{\mathfrak p}\bigl(\hat \tau_z(\delta_1),\hat \tau_z(\delta_2)\bigr)=
d_{\mathfrak p}(x_1\alpha,x_2\alpha)=d_{\mathfrak p}(x_1,x_2)=|\gamma(x_1)-\gamma(x_2)|.
\feq
A slight modification of the parametrization $\hat \tau_z$ introduced in \eqref{bpp} plays a central role in our study (cf. \eqref{bipolar}, Remark~\ref{biporem} and Lemma~\ref{bipo} below). Note that by virtue of \eqref{r1} and \eqref{yrho}, the definition \eqref{bpp} implies that
\beqn
\label{bipovar}
U_n(z)=\hat \tau_z(\omega_n).
\feqn
\item[(ii)] An alternative short proof of the lemma follows from the parametric definition of the circle $H_z$ given in \eqref{circle} below and \eqref{curious} which identifies the (hyperbolic, e. g. having two distinct fixed points) M\"{o}bius transformation $U_z(z)$ in terms of its fixed points and derivatives at those points (see, for instance, \cite[p.~22]{localv}).
\end{remark}
For a given initial point $z\in\ol \cald\backslash\{-\alpha,\alpha\},$ an equation of the circle in Lemma~\ref{cinv} can be written as
\beqn
\label{circle}
H_z=\Bigl\{u\in\cc: \frac{u+\alpha}{u-\alpha}\cdot \frac{z-\alpha}{z+\alpha}\in \rr\Bigr\}.
\feqn
Note that for $z\in l_\alpha,$ the circle $H_z$ is degenerate, namely $H_z$ is the straight line which contains the segment $l_\alpha.$  A direct computation exploiting \eqref{r1} shows that for $u=U_n(z)$ in \eqref{circle} we have
\beqn
\label{curious}
\frac{U_n(z)+\alpha}{U_n(z)-\alpha}=\frac{z+\alpha}{z-\alpha} \cdot \frac{1+y_n}{1-y_n}=\frac{z+\alpha}{z-\alpha} \cdot e^{\omega_n},\qquad z\in\ol \cald\backslash\{-\alpha,\alpha\}.
\feqn
Writing $\frac{1+y_n}{1-y_n}$ as $\frac{\alpha+y_n\alpha }{\alpha-y_n\alpha},$ the last formula can be seen as a direct implication of the following instance of the cross-ratio property of the M\"{o}bius transformation $U_n:$
\beq
\frac{U_n(z)-U_n(-\alpha)}{U_n(z)-U_n(\alpha)}\cdot \frac{U_n(0)-U_n(\alpha)}{U_n(0)-U_n(-\alpha)}=\frac{z-(-\alpha)}{z-\alpha}\cdot \frac{0-\alpha}{0-(-\alpha)}.
\feq
It is not hard to verify that the center of the circle $H_z$ is at the point $-i\alpha c$ and its radius is $\sqrt{1+c^2},$ where $c$ is the unique solution to $|z+ic\alpha|^2=1+c^2,$ that is
$c=\frac{1-|z|^2}{2i(\ol z\alpha-z\ol \alpha) }.$ These explicit formulas are not used anywhere in the sequel and are given here for the sake of completeness only.
\par
We next consider \emph{bipolar coordinates} \cite{bp, bp3, bp1} associated with the pair $(\alpha,-\alpha).$ For $z\in\cc$ we let
\beqn
\label{bipolar}
\tau(z)=\log \frac{|z+\alpha|}{|z-\alpha|} \qquad \mbox{\rm and} \qquad \sigma(z)=
\sign\bigl(\mbox{\rm \small Im}(\ol \alpha z)\bigr)\cdot \angle (-\alpha, z,\alpha),
\feqn
where $\angle(A,B,C)\in (0,\pi]$ is the angle $B$ in the triangle formed by the vertexes $A,B,$ and $C$ in the complex plane. Thus $\tau(z)\in (0,\infty),$ $\sigma(z)=\pi$ if $z\in l_\alpha,$ $\sigma(z)>0$ if the orientation of the circular arc going from $\alpha$ to $-\alpha$ through $z$ is anticlockwise, $\sigma(z)<0$ otherwise,  and we convene that $\sigma\in\bigl(\frac{\pi}{2},\pi\bigr]\bigcup \bigl(-\pi,-\frac{\pi}{2}\bigr)$ for $z\in\cald$ (see Fig.~\ref{fig1} below).
\par
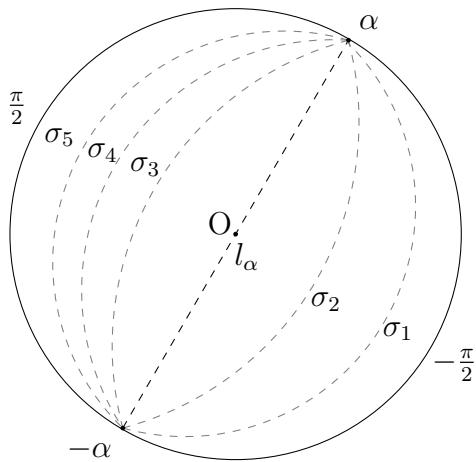
\begin{figure}[!hb]
\begin{center}
\usetikzlibrary{calc,through}
\begin{tikzpicture}[scale=3.0]
\draw (5, 0) circle (1);
\draw[dashed] (4.5,-0.86) node[below left] {$-\alpha$} --
(5.5,0.86) node[above right] {$\alpha$};
\tkzDefPoint(5,0){O}
\tkzDefPoint(5.86,-0.5){C}
\tkzDefPoint(5.43,-0.25){D}
\tkzDefPoint(4.14,0.5){E}
\tkzDefPoint(5.215,-0.125){F}
\tkzDefPoint(4.775,0.125){G}
  \tkzDefPoint(4.5,-0.86){B}
  \tkzDefPoint(5.5,0.86){A}
  \tkzDrawArc[dashed](C,A)(B)
  \tkzDrawArc[dashed](D,A)(B)
  \tkzDrawArc[dashed](E,B)(A)
   \tkzDrawArc[dashed](G,B)(A)
  \tkzDrawArc[dashed](F,A)(B)
  \node at (5.05,-0.1) {$l_\alpha$};
  \node at (5.72,-0.43) {$\sigma_1$};
  \node at (5.40,-0.3) {$\sigma_2$};
  \node at (4.60,0.3) {$\sigma_3$};
  \node at (4.41,0.36) {$\sigma_4$};
  \node at (4.22,0.43) {$\sigma_5$};
  \node at (4.03,0.56) {$\frac{\pi}{2}$};
  \node at (5.97,-0.58) {$-\frac{\pi}{2}$};
  \fill (5,0) circle (0.3pt);
  \node at (4.93,0.05) {O};
  \fill (4.5,-0.86) circle (0.3pt);
  \fill (5.5,0.86) circle (0.3pt);
\end{tikzpicture}
\caption{The circles $H_z$ are isoparametric curves for the bipolar coordinate $\sigma,$ that is the value of $\sigma$ is constant on each circle. For $\sigma$'s associated with the dashed circles in the picture, we have $-\pi<\sigma_2<\sigma_1<-\frac{\pi}{2}$ and
$\frac{\pi}{2}<\sigma_5<\sigma_4<\sigma_3<\pi.$ Furthermore, $\sigma(z)=\pm\frac{\pi}{2}$ for $z\in\partial\cald$ and $\sigma(z)=\pi$ for $z\in\ol l_\alpha.$ The location of a point $z$ on a dashed circle can be determined using the second coordinate $\tau,$ which is the logarithm of the ratio of the Euclidean distances from $z$ to $-\alpha$ and from $z$ to $\alpha.$ It can be shown that $\tau$ is constant on arcs orthogonal to the dashed ones (see Fig.~\ref{fig0} above).}
\label{fig1}
\end{center}
\end{figure}
\par
It can be shown that \cite{bp3}
\beqn
\label{bpz}
z=\alpha i \cot \Bigl(\frac{\sigma(z)+i\tau(z)}{2}\Bigr).
\feqn
In a somewhat more explicit form, if $z=x+iy,$ then assuming for simplicity $\alpha=1,$ one obtains  \cite{bp, bp3, bp1}
\beqn
\label{bpxy}
x=\frac{\sinh \tau(z)}{\cosh \tau(z)-\cos \sigma(z)}\qquad \mbox{\rm and} \qquad
y=\frac{\sin \sigma(z)}{\cosh \tau(z)-\cos \sigma(z)}.
\feqn
The bipolar coordinates are commonly used to separate variables in equations of mathematical physics when the Laplace operator is the dominated term (the idea goes back to \cite{bpa}, see, for instance, \cite{bp6,bp4,bp3,bp5}).
\par
Lemma~\ref{cinv} and \eqref{curious} together imply the following:
\begin{lemma}
\label{bipo}
For any $n\in\zz_+$ and $z\in\cald\backslash\{-\alpha,\alpha\}$ we have
\beq
\tau\bigl(U_n(z)\bigr)=\tau(z) + \omega_n \qquad \mbox{\rm and}\qquad \sigma\bigl(U_n(z)\bigr) = \sigma(z).
\feq
\end{lemma}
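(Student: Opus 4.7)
The proof rides essentially on the identity \eqref{curious}, which gives the key algebraic link
\[
\frac{U_n(z)+\alpha}{U_n(z)-\alpha}=\frac{z+\alpha}{z-\alpha}\cdot e^{\omega_n},
\]
where $e^{\omega_n}$ is a strictly positive real number. The plan is to read off both conclusions of the lemma by separately tracking modulus and argument of the two sides.

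For the $\tau$ statement, I would simply take absolute values and logarithms. Since $e^{\omega_n}>0$, we get
\[
\log\frac{|U_n(z)+\alpha|}{|U_n(z)-\alpha|}=\log\frac{|z+\alpha|}{|z-\alpha|}+\omega_n,
\]
which by the definition in \eqref{bipolar} is exactly $\tau(U_n(z))=\tau(z)+\omega_n$. This part is immediate and requires no further work.

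For the $\sigma$ statement, the main point is to identify $\sigma(z)$ with $\arg\bigl((z+\alpha)/(z-\alpha)\bigr)$ under the sign conventions adopted in \eqref{bipolar}. The vectors $z-(-\alpha)=z+\alpha$ and $z-\alpha$ emanate from the two poles and terminate at $z$, so they form at the vertex $z$ precisely the angle $\angle(-\alpha,z,\alpha)$ of the triangle $(-\alpha,z,\alpha)$. Consequently
\[
\Bigl|\arg\Bigl(\frac{z+\alpha}{z-\alpha}\Bigr)\Bigr|=\angle(-\alpha,z,\alpha),
\]
and the sign of $\arg\bigl((z+\alpha)/(z-\alpha)\bigr)$ records whether the arc through $\alpha,z,-\alpha$ is traversed clockwise or anticlockwise, which in turn is detected by $\sign\bigl(\mbox{\rm \small Im}(\ol\alpha z)\bigr)$. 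Thus $\sigma(z)=\arg\bigl((z+\alpha)/(z-\alpha)\bigr)$ with the principal branch chosen in the appropriate range, and the boundary cases (the value $\pi$ on $\ol l_\alpha$ and $\pm\pi/2$ on $\partial\cald$) are checked directly from \eqref{bipolar}. Multiplication by a positive real preserves arguments, so applying $\arg$ to both sides of \eqref{curious} yields $\sigma(U_n(z))=\sigma(z)$.

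The only genuinely delicate step is the bookkeeping on signs and principal values in the identification $\sigma(z)=\arg\bigl((z+\alpha)/(z-\alpha)\bigr)$; once that is in place, both identities drop out of \eqref{curious} by separating the polar form. Everything else is formal.
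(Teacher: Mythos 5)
Your argument is correct and follows the same route the paper indicates (the paper simply states that Lemma~\ref{cinv} and \eqref{curious} imply the result), namely reading off both bipolar coordinates from \eqref{curious} since $e^{\omega_n}$ is a positive real. One small sign slip: with the paper's conventions one in fact has $\sigma(z)=-\arg\bigl((z+\alpha)/(z-\alpha)\bigr)$ on the appropriate branch (check $\alpha=1$, $z=i$, where $\arg((z+\alpha)/(z-\alpha))=-\pi/2$ but $\sigma(i)=+\pi/2$), though this does not affect the conclusion, since multiplication by $e^{\omega_n}>0$ preserves the argument regardless of its sign.
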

Thus the displacement of the $\tau$-coordinate $\tau\bigl(U_n(z)\bigr)$ relatively to the initial value $\tau(z)$
is equivalent to a partial sum of suitable i.\,i.\,d. variables.
\begin{remark}
\label{biporem}
A comparison Lemma~\ref{bipo} with \eqref{bipovar} indicates that the function $\tau(u)$ is a shifted version of the inverse of $\hat \tau_z(u)$ introduced in \eqref{bpp} for any $z\in\cald.$ Indeed, it is not hard to verify that in general
\beq
\tau\bigl(g_{\gamma}(z)\bigr)=\tau(z)+\gamma=\tau(z)+\hat \tau_z^{-1}\bigl(g_{\gamma}(z)\bigr), \qquad\forall~\gamma\in\rr,z\in\cald,
\feq
where $g_{\gamma}$ is the M\"{o}bius transformation defined in \eqref{ggamma}.
\end{remark}
The following result (cf. formula (8) in \cite{circ}) is equivalent to the particular case of Lemma~\ref{bipo} when $|z|=1.$
\begin{corollary}
\label{angular}
For $n\in\zz_+$ and $z\in\partial\cald\backslash\{-\alpha,\alpha\},$ let
\beq
\psi_n^-(z):=\angle\bigl(U_n(z),0,\alpha\bigr)
\qquad
\mbox{\rm and}
\qquad
\psi_n^+(z):=\angle\bigl(U_n(z),0,-\alpha\bigr).
\feq
Then, for all $n\in\zz_+$ and $z\in\partial \cald\backslash\{-\alpha,\alpha\},$
\beq
\tan \Bigl(\frac{\psi_n^+(z)}{2}\Bigr)=e^{\omega_n}\cdot \tan \Bigl(\frac{\psi_0^+(z)}{2}\Bigr),
\feq
and, correspondingly,
\beq
\tan \Bigl(\frac{\psi_n^-(z)}{2}\Bigr)=e^{-\omega_n}\cdot \tan \Bigl(\frac{\psi_0^-(z)}{2}\Bigr).
\feq
\end{corollary}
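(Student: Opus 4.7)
The key observation is that, by property (ii) of the transformations $q_{x,\alpha}$ (the boundary $\partial\cald$ is invariant), $U_n(z)\in\partial\cald$ whenever $z\in\partial\cald$. Hence the corollary asks us to evaluate $\tau\bigl(U_n(z)\bigr)$ for points on the unit circle and re-express the bipolar coordinate $\tau$ purely in terms of the angular quantities $\psi_n^\pm(z)$. Once this re-expression is in hand, the conclusion follows by a direct application of Lemma~\ref{bipo}.

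To carry this out, I would fix an arbitrary $w\in\partial\cald\setminus\{-\alpha,\alpha\}$ (which will later be specialized to $w=U_n(z)$) and analyze the two isoceles triangles with vertices $\{w,0,\alpha\}$ and $\{w,0,-\alpha\}$. Both have two sides of unit length (namely $|0-w|=1$ and $|0-(\pm\alpha)|=1$), and the angle at the vertex $0$ is $\psi^-(w):=\angle(w,0,\alpha)$ and $\psi^+(w):=\angle(w,0,-\alpha)$ respectively. The law of cosines and the half-angle identity $1-\cos\theta=2\sin^2(\theta/2)$ then give
\[
|w-\alpha|=2\sin\bigl(\psi^-(w)/2\bigr)\qquad\text{and}\qquad |w+\alpha|=2\sin\bigl(\psi^+(w)/2\bigr).
\]
Since $\alpha$, $0$, $-\alpha$ are collinear, we have $\psi^+(w)+\psi^-(w)=\pi$, so $\sin\bigl(\psi^+(w)/2\bigr)=\cos\bigl(\psi^-(w)/2\bigr)$. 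Plugging these into the definition \eqref{bipolar} of $\tau$ yields the clean identity
\[
\tau(w)=\log\frac{|w+\alpha|}{|w-\alpha|}=\log\tan\bigl(\psi^+(w)/2\bigr)=-\log\tan\bigl(\psi^-(w)/2\bigr).
\]

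With this formula, the corollary is immediate. Applying it first to $w=U_n(z)$ and then to $w=z$, and combining with Lemma~\ref{bipo} (which gives $\tau(U_n(z))=\tau(z)+\omega_n$), I obtain
\[
\log\tan\bigl(\psi_n^+(z)/2\bigr)-\log\tan\bigl(\psi_0^+(z)/2\bigr)=\omega_n,
\]
which upon exponentiation is the first identity. The second identity follows either by the same argument using the $-\log\tan(\psi^-/2)$ form of $\tau$, or directly from $\psi_n^+ + \psi_n^- = \pi$ via $\tan(\psi_n^+/2)=\cot(\psi_n^-/2)$.

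There is no real obstacle: the only care needed is to verify that the half-angles lie in $(0,\pi/2)$ so that the tangents are well-defined and positive (guaranteed by $z,U_n(z)\in\partial\cald\setminus\{-\alpha,\alpha\}$, which forces $\psi_n^\pm(z)\in(0,\pi)$), and to correctly identify $|w\pm\alpha|$ with the opposite side in the appropriate isoceles triangle. Everything else is trigonometric bookkeeping.
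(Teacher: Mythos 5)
Your proof is correct and follows the same essential route as the paper's: establish that $\tau(w)=\log\tan\bigl(\psi^+(w)/2\bigr)$ for $w\in\partial\cald$ and then invoke Lemma~\ref{bipo}. The only cosmetic difference is in how the trigonometric identity is derived --- you use the law of cosines in the two isoceles triangles at the origin together with supplementarity of $\psi^\pm$, whereas the paper appeals to the inscribed angle theorem (equation \eqref{halfa}) combined implicitly with Thales' right angle at $U_n(z)$.
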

\begin{proof}{Proof of Corollary~\ref{angular}} Note that for $z\in\partial\cald\backslash l_\alpha,$
\beqn
\label{halfa}
\frac{\psi_n^-}{2}=\angle\bigl(U_n(z),-\alpha,\alpha\bigr)\qquad \mbox{\rm and}  \qquad \frac{\psi_n^+}{2}=\angle\bigl(U_n(z),\alpha,-\alpha\bigr).
\feqn
Thus for $z\in \partial \cald,$ we have
\beq
\tau\bigl(U_n(z)\bigr)=\log \Bigl(\tan \frac{\psi_n^+}{2}\Bigr),
\feq
and hence the result in Corollary~\ref{angular} is a direct implication of Lemma~\ref{bipo}.
\end{proof}
Recall that in the Poincar\'{e} disc model, Busemann function corresponding to $\xi\in\partial\cald$ is given by \cite{bbuzu,ag}
\beqn
\label{buz}
B_\xi(z)=-\log\Bigl(\frac{1-|z|^2}{|\xi-z|^2}\Bigr),\qquad z\in \cald.
\feqn
Note that $B_\xi(0)=0,$ $\lim_{z\to\xi} B_\xi(z)=-\infty,$ and $\lim_{z\to\zeta} B_\xi(z)=+\infty$ for any $\zeta\in\cald$ such that $\zeta\neq \xi.$ Intuitively, a Busemann function $B_\xi$ is the distance function on $\cald$ from a point $\xi\in\partial \cald$  ``at infinity". A rigorous definition utilizing this heuristics  and the disc boundary $\partial \cald$ as ``the infinity" is as follows:
\beq
B_\xi(z)=\lim_{x\to1^-} \bigl[d_{\mathfrak p}(z,x\xi)-d_{\mathfrak p}(0,x\xi)],\qquad \xi\in\partial \cald,\,z\in\cald.
\feq
The function within the brackets in \eqref{buz} is the Poisson kernel of the unit disk, the density of a harmonic measure on the disk's boundary \cite{doob1,doob}. The interpretation of $\frac{1}{2\pi}\frac{1-|z|^2}{|e^{i\theta}-z|^2}\,d\theta$ as the density of the standard (Euclidean) planar Brownian motion starting at $z\in\cald$ and evaluated at the first hitting time of $\partial \cald$ provides a link between the Busemann functions and the theory of Brownian motion. We remark that in the context of statistical applications with circular data, the distribution on $\partial \cald$ with this density is often referred to as a circular or wrapped Cauchy distribution \cite{circ}. The curious link between Busemann functions, the probabilistic interpretation of the Poisson kernels, and canonical iterative models of circular data in statistics is not used anywhere in the sequel and is brought here for the sake of completeness only.
\par
From \eqref{yrho} we obtain the following expression for $y_n:$
\beqn
\label{yg}
y_n=\frac{e^{\omega_n}-1}{e^{\omega_n}+1}.
\feqn
Substituting \eqref{r1} and \eqref{yg} into \eqref{buz} yields the following:
\begin{proposition}
\label{pbuz}
For all $z\in \cald$ and $n\in\zz_+,$
\beq
B_\alpha\bigl(U_n(z)\bigr)=-\omega_n+B_\alpha(z)\qquad \mbox{\rm and}\qquad
B_{-\alpha}\bigl(U_n(z)\bigr)=\omega_n+B_{-\alpha}(z).
\feq
\end{proposition}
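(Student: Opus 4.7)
The plan is to prove Proposition~\ref{pbuz} by direct algebraic substitution, using the explicit form of $U_n(z)$ from \eqref{r1} together with the relation $\frac{1+y_n}{1-y_n}=e^{\omega_n}$ from \eqref{yrho} (equivalently \eqref{yg}), and simplifying the ratio $\frac{1-|U_n(z)|^2}{|\pm \alpha - U_n(z)|^2}$ that appears inside the logarithm in \eqref{buz}.

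First I would compute the numerator: since $|\alpha|=1$, a brief expansion shows
\[
|1+y_n\ol\alpha z|^2 - |z+y_n\alpha|^2 = (1-|z|^2)(1-y_n^2),
\]
so that $1-|U_n(z)|^2 = (1-|z|^2)(1-y_n^2)/|1+y_n\ol\alpha z|^2$. Next I would compute the two denominators separately. Writing
\[
\alpha - U_n(z) = \frac{\alpha(1+y_n\ol\alpha z)-(z+y_n\alpha)}{1+y_n\ol\alpha z} = \frac{(1-y_n)(\alpha-z)}{1+y_n\ol\alpha z},
\]
and analogously
\[
-\alpha - U_n(z) = \frac{-(1+y_n)(\alpha+z)}{1+y_n\ol\alpha z},
\]
I would obtain $|\alpha-U_n(z)|^2=(1-y_n)^2|\alpha-z|^2/|1+y_n\ol\alpha z|^2$ and $|{-\alpha}-U_n(z)|^2=(1+y_n)^2|\alpha+z|^2/|1+y_n\ol\alpha z|^2$.

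Forming the ratios, the common factor $|1+y_n\ol\alpha z|^2$ cancels, one of $(1\pm y_n)$ cancels with the $(1\mp y_n^2)$ coming from $1-|U_n(z)|^2$, and the surviving factor is either $(1+y_n)/(1-y_n)=e^{\omega_n}$ or its reciprocal $e^{-\omega_n}$. Taking $-\log$ of both sides of the resulting identities
\[
\frac{1-|U_n(z)|^2}{|\alpha-U_n(z)|^2} = e^{\omega_n}\cdot\frac{1-|z|^2}{|\alpha-z|^2},\qquad \frac{1-|U_n(z)|^2}{|{-\alpha}-U_n(z)|^2} = e^{-\omega_n}\cdot\frac{1-|z|^2}{|{-\alpha}-z|^2},
\]
and applying the definition \eqref{buz} yields the two stated identities.

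There is no real obstacle here: the argument is an exercise in algebraic manipulation whose only slight subtlety is recognizing that $|1+y_n\ol\alpha z|^2$ and one of the factors $(1\mp y_n)$ are designed to cancel in the ratio, exposing the exponential of $\omega_n$ via \eqref{yg}. One could alternatively invoke Lemma~\ref{bipo} together with the identity $\tau(z)=\frac{1}{2}\log\frac{|z+\alpha|^2}{|z-\alpha|^2}$ and express $B_{\pm\alpha}$ in terms of $\tau$ and $1-|z|^2$, but since the direct computation is short and self-contained it seems preferable.
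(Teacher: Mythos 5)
Your proof is correct and takes essentially the same approach as the paper: the paper's proof consists precisely of the phrase ``substituting \eqref{r1} and \eqref{yg} into \eqref{buz} yields the following,'' and your computation fills in exactly that substitution, including the key cancellation of $|1+y_n\ol\alpha z|^2$ and the factoring $1-y_n^2=(1-y_n)(1+y_n)$ that exposes $e^{\pm\omega_n}$. The alternative route you mention at the end, via $\tau$ or the formula \eqref{curious} combined with the Poisson kernel identity, is precisely what the paper records separately in Remark~\ref{buza}.
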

This observation allows us to immediately translate any result for the random walk $\omega_n$ into a counterpart statement about the behavior of Buzemann functions $B_{\pm\alpha}\bigl(U_n(z)\bigr).$ For an example of application of Proposition~\ref{pbuz} see Theorem~\ref{escape} below.
\begin{remark}
\label{buza}
The result in Proposition~\ref{pbuz} can be also seen as a direct implication of the formula \eqref{curious} because of the Poisson kernel identity $\frac{1-|z|^2}{|\xi-z|^2}=\mbox{\rm \small Re}\bigl(\frac{\xi+z}{\xi-z}\bigr),$ $\xi\in\partial \cald$ and $z\in\cald.$
\end{remark}
The following lemma complements the result in Lemma~\ref{cinv} by highlighting the choice of the direction of the motion along the circle $H_z.$
\begin{lemma}
\label{mon}
For all $n\in\nn,$ $z\in\cald\backslash \{-\alpha,\alpha\}$ and $\veps\in\{-1,1\},$ the following statements are equivalent:
\begin{enumerate}[(i)]
\item $|U_n(z)-\veps \alpha|<|U_{n-1}(z)-\veps \alpha|.$
\item $\veps x_n>0.$
\item $\veps \gamma_n>0.$
\item $B_{\veps\alpha}\bigl(U_n(z)\bigr)<
B_{\veps\alpha}\bigl(U_{n-1}(z)\bigr).$
\end{enumerate}
\end{lemma}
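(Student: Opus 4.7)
The lemma asserts four equivalent conditions, so I would pick condition (iii) as a hub and separately establish (ii)$\Leftrightarrow$(iii), (iv)$\Leftrightarrow$(iii), and (i)$\Leftrightarrow$(iii). Two of these follow immediately from results already at hand; the remaining one, (i)$\Leftrightarrow$(iii), is the only piece requiring real computation.

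Equivalence (ii)$\Leftrightarrow$(iii) is immediate from the definition \eqref{poi}: the map $t\mapsto \log\frac{1+t}{1-t}$ is odd and strictly increasing on $(-1,1)$, so $\sign(\gamma_n)=\sign(x_n)$. Equivalence (iv)$\Leftrightarrow$(iii) follows by subtracting Proposition~\ref{pbuz} at consecutive indices: $B_\alpha(U_n(z))-B_\alpha(U_{n-1}(z))=-\gamma_n$ and $B_{-\alpha}(U_n(z))-B_{-\alpha}(U_{n-1}(z))=+\gamma_n$, or uniformly $B_{\veps\alpha}(U_n(z))-B_{\veps\alpha}(U_{n-1}(z))=-\veps\gamma_n$; thus (iv) holds iff $\veps\gamma_n>0$.

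For (i)$\Leftrightarrow$(iii), I would use \eqref{grap} to write $U_n=g_{\gamma_n}\circ U_{n-1}$ and set $w:=U_{n-1}(z)\in\cald$. A short direct simplification using \eqref{qxalpha} yields the unified identity $g_{\gamma_n}(w)-\veps\alpha=\frac{(1-\veps x_n)(w-\veps\alpha)}{1+x_n\bar\alpha w}$ for $\veps\in\{-1,1\}$. Taking moduli, condition (i) is equivalent to $(1-\veps x_n)^2<|1+x_n\bar\alpha w|^2$. Expanding and regrouping yields the factorization $|1+x_n\bar\alpha w|^2-(1-\veps x_n)^2=\veps x_n\cdot R_n(w)$, where $R_n(w):=2\bigl(1+\veps\,\mbox{\rm \small Re}(\bar\alpha w)\bigr)-\veps x_n\bigl(1-|w|^2\bigr)$. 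So (i) is equivalent to $\veps x_n>0$, provided we verify $R_n(w)>0$ whenever $w\in\cald$ and $|x_n|<1$.

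The only nontrivial step is this positivity of $R_n(w)$, which I would dispatch by a two-case argument. When $\veps x_n\le 0$, both terms of $R_n(w)$ are non-negative, and the first one is bounded below by $2(1-|w|)>0$. When $\veps x_n\in(0,1)$, using $\veps x_n<1$ one has $R_n(w)>2\bigl(1+\veps\,\mbox{\rm \small Re}(\bar\alpha w)\bigr)-(1-|w|^2)=|w+\veps\alpha|^2$, which is strictly positive because $-\veps\alpha\in\partial\cald$ while $w\in\cald$. I anticipate no other difficulty; the entire proof is ultimately a short chain of algebraic manipulations bridging Proposition~\ref{pbuz} and the explicit form \eqref{qxalpha} of the generating M\"obius transformations.
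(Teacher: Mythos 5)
Your proof is correct, and it handles the equivalences (ii)$\Leftrightarrow$(iii) and (iii)$\Leftrightarrow$(iv) exactly as the paper does: the first is immediate from the monotonicity and oddness of $\gamma$, the second by differencing Proposition~\ref{pbuz}. Where you genuinely diverge from the paper is the equivalence (i)$\Leftrightarrow$(ii). The paper dispatches it geometrically: the orbit is the image of $\{y_n\alpha\}$ under the conformal map $T_z$ from \eqref{tea}, which fixes $\pm\alpha$ and carries the diameter $l_\alpha$ onto the arc $H_z\cap\cald$ preserving orientation, so moving toward $\veps\alpha$ on the diameter corresponds to moving toward $\veps\alpha$ on the arc. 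You instead compute directly: write $U_n=g_{\gamma_n}\circ U_{n-1}$, set $w=U_{n-1}(z)$, use the paper's own identity \eqref{dist} (which you rederive) to reduce (i) to the sign of $|1+x_n\bar\alpha w|^2-(1-\veps x_n)^2$, factor this as $\veps x_n\,R_n(w)$, and then prove $R_n(w)>0$ for all $w\in\cald$, $x_n\in(-1,1)$. I checked the factorization and the two-case positivity argument (including the clean lower bound $R_n(w)>|w+\veps\alpha|^2>0$ when $\veps x_n\in(0,1)$); both are correct. Your route is more elementary and entirely self-contained, making explicit a monotonicity fact that the paper's orientation argument leaves implicit (namely that the Euclidean distance to $\veps\alpha$ is actually monotone along the arc $H_z\cap\cald$ — true, but it relies on the arc not containing the point of $H_z$ antipodal to $\veps\alpha$, a point the paper does not address). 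The paper's argument is shorter and more conceptual, connecting the lemma to the parametrization $T_z$ used in Lemma~\ref{cinv} and Remark~\ref{circles}. Both work; yours is the more watertight of the two.
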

\begin{proof}[Proof of Lemma~\ref{mon}]
The equivalence of (ii) and (iii) is trivial. The equivalence of (iii) and (iv) follows from Proposition~\ref{pbuz}. It remains to show the equivalence of (i) and (ii).
To this end observe that the orbit $\{U_n(z)\}_{n\in\zz_+}$ is the image of $\{y_n\alpha\}_{n\in\zz_+}$ under a conformal transformation $T_z,$ which is defined in \eqref{tea}. By virtue of Lemma~\ref{cinv}, the claim is a consequence of the fact that $T_z$ is preserving the orientation of curves,
\end{proof}
An explicit formula for the hyperbolic distance $d_{\mathfrak p}\bigl(0,U_n(z)\bigr)$ in terms of the initial bipolar coordinates $\bigl(\sigma(z),\tau(z)\bigr)$ can be in principle obtained from \eqref{r1} and \eqref{bpxy}. We conclude this subsection with the following approximation result.
\begin{lemma}
\label{zero}
For all $z\in\cald$ and $n\in\nn,$ we have $\bigl|d_{\mathfrak p}\bigl(0,U_n(z)\bigr)-|\omega_n|\bigr|\leq d_{\mathfrak p}(0,z).$
\end{lemma}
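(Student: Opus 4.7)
The plan is to combine two simple facts: first, $U_n$ is an isometry of $(\cald, d_{\mathfrak p})$, and second, the displacement of the origin under $U_n$ can be identified with $|\omega_n|$. The desired inequality then becomes an instance of the reverse triangle inequality applied to the triple $\{0,\, U_n(0),\, U_n(z)\}$.

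First, I would note that each $q_{x,\alpha}\in Q_\alpha$ is a holomorphic automorphism of $\cald$ (property (i) of Section~\ref{prelim}), hence an isometry of the Poincar\'e disk; being a finite composition of such maps, $U_n$ is itself an isometry. Therefore $d_{\mathfrak p}\bigl(U_n(0),U_n(z)\bigr)=d_{\mathfrak p}(0,z)$.

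Next I would compute $d_{\mathfrak p}\bigl(0,U_n(0)\bigr)$ explicitly. Setting $z=0$ in \eqref{r1} gives $U_n(0)=y_n\alpha$, so $|U_n(0)|=|y_n|$ and hence
\[
d_{\mathfrak p}\bigl(0,U_n(0)\bigr)=\log\frac{1+|y_n|}{1-|y_n|}=\bigl|\gamma(y_n)\bigr|.
\]
By \eqref{yrho} we have $\gamma(y_n)=\log\bigl(\frac{1+y_n}{1-y_n}\bigr)=\omega_n$, so $d_{\mathfrak p}\bigl(0,U_n(0)\bigr)=|\omega_n|$.

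Combining these two observations with the reverse triangle inequality yields
\[
\bigl|d_{\mathfrak p}\bigl(0,U_n(z)\bigr)-|\omega_n|\bigr|
=\bigl|d_{\mathfrak p}\bigl(0,U_n(z)\bigr)-d_{\mathfrak p}\bigl(0,U_n(0)\bigr)\bigr|
\leq d_{\mathfrak p}\bigl(U_n(0),U_n(z)\bigr)=d_{\mathfrak p}(0,z),
\]
which is precisely the claim. There is no real obstacle here; the only point requiring brief care is the sign bookkeeping in the identification $d_{\mathfrak p}(0,y_n\alpha)=|\omega_n|$, which is handled by the oddness of the map $y\mapsto\log\frac{1+y}{1-y}$ already recorded in \eqref{poi}.
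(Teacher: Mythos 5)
Your proof is correct and follows essentially the same route as the paper: compute $d_{\mathfrak p}\bigl(0,U_n(0)\bigr)=|\omega_n|$ via the explicit formula, then combine with the isometry property of $U_n$ and the (reverse) triangle inequality applied to $\{0,U_n(0),U_n(z)\}$. The only cosmetic difference is that the paper evaluates $d_{\mathfrak p}\bigl(0,U_n(0)\bigr)$ by rewriting $0=U_n(-y_n\alpha)$ and invoking isometry, whereas you compute it directly from $U_n(0)=y_n\alpha$; both give $|\omega_n|$.
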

\begin{proof}[Proof of Lemma~\ref{zero}]
First, observe that since $0=U_n(-y_n\alpha),$
\beq
d_{\mathfrak p}\bigl(U_n(0),0\bigr)=d_{\mathfrak p}(0,-y_n\alpha)=\log\Bigl(\frac{1+|y_n|}{1-|y_n|}\Bigr)=\Bigl|\log\Bigl(\frac{1+y_n}{1-y_n}\Bigr)\Bigr|=|\omega_n|,
\feq
where the first identity is implied by the isometry property of  M\"{o}bius transformation $U_n.$ Next, the identity $d_{\mathfrak p}\bigl(U_n(0),0\bigr)=|\omega_n|$ can be utilized to deduce the asymptotic behavior of $d_{\mathfrak p}\bigl(U_n(z),0\bigr)$ for an arbitrary
$z\in\cald$ through the following triangle inequalities:
\beqn
\label{i1}
d_{\mathfrak p}\bigl(U_n(z),0\bigr)\leq d_{\mathfrak p}\bigl(U_n(z),U_n(0)\bigr)+d_{\mathfrak p}\bigl(U_n(0),0\bigr)=
d_{\mathfrak p}(z,0)+d_{\mathfrak p}\bigl(U_n(0),0\bigr)
\feqn
and
\beqn
\label{i3}
d_{\mathfrak p}\bigl(U_n(z),0\bigr)\geq d_{\mathfrak p}\bigl(U_n(0),0\bigr)-d_{\mathfrak p}\bigl(U_n(0),U_n(z)\bigr)=
d_{\mathfrak p}\bigl(U_n(0),0\bigr)-d_{\mathfrak p}(z,0),
\feqn
where the equalities are due to the isometry property of M\"{o}bius transformations.
\end{proof}
For an example of applications of Lemmas~\ref{bipo} and~\ref{zero} see Proposition~\ref{plln} below.
\section{Convergence to the boundary of the disk}
\label{converge}
In the this section we study the convergence of $U_n(z)$ to the disk boundary $\partial \cald,$ more precisely to the pair set $\cala=\{-\alpha,\alpha\}.$ Heuristically, $\lim_{n\to\infty}|\omega_n|=+\infty$ in probability (almost surely in a transient regime) for standard models of random walk on $\rr.$ Hence, in a suitable mode of convergence, $\cala$ is typically an attractor for $U_n$ (for instance, by Proposition~\ref{pbuz} above). The goal of this section is to characterize this convergence in terms of basic properties of the random walk $\omega_n.$
\par
First we impose the following basic assumption on the sequence $(x_n)_{n\in\nn}.$
\begin{assume}
\label{a1}
\item[(A1)] $(x_n)_{n\in\nn}$ is a stationary and ergodic sequence.
\item [(A2)] $P\bigl(x_1\in (-1,1)\bigr)=1.$
\item [(A3)] Let $\gamma_n=\log \bigl(\frac{1+x_n}{1-x_n}\bigr).$ Then $E(\gamma_1)$ is well defined, possibly infinite.
\end{assume}
We denote
\beqn
\label{eps}
\veps_x=\sign\bigl(E(\gamma_1)\bigr).
\feqn
It follows from a general result of \cite{keym} for a composition
of random M\"{o}bius transformations that if Assumption~\ref{a1} holds and $\veps_x\neq 0$ then, with probability one, for any $z\in\ol \cald\backslash\{-\alpha,\alpha\},$
\beqn
\label{thelimit}
\lim_{n\to\infty}U_n(z)=\veps_x \alpha.
\feqn
Formally, \eqref{thelimit} is proved in \cite{keym} only for i.\,i.\,d. sequences $x_n,$ but it is straightforward to see that their argument works verbatim under Assumption~\ref{a1}. The rate of escape to the boundary can be quantified using results from the previous section. Namely, by virtue of Lemma~\ref{bipo} and Proposition~\ref{pbuz} we have the following:
\begin{theorem}
\label{escape}
Let Assumption~\ref{a1} hold and assume in addition that $\veps_x\neq 0.$ Then w.\,p.\,1,
\beq
\lim_{n\to\infty} \frac{1}{n}B_{\veps_x\alpha}\bigl(U_n(z)\bigr)=-\bigl|E(\gamma_1)\bigr|
\quad \mbox{\rm and}\quad \lim_{n\to\infty} \frac{1}{n}d_{\mathfrak p}\bigl(0,U_n(z)\bigr)=\bigl|E(\gamma_1)\bigr|
\feq
for all $z\in\cald.$
\end{theorem}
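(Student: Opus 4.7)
The plan is to reduce both limit statements to the Birkhoff ergodic theorem applied to the stationary ergodic sequence $(\gamma_n)_{n\in\nn}$. The first step is to invoke Birkhoff's theorem to conclude that, almost surely,
\[
\lim_{n\to\infty}\frac{\omega_n}{n}=E(\gamma_1).
\]
Since Assumption~\ref{a1} only guarantees that $E(\gamma_1)$ is well defined (possibly $\pm\infty$), I would either cite the one-sided integrability version of the ergodic theorem (which still yields a.s.\ convergence to the extended-real mean) or split $\gamma_n=\gamma_n^+-\gamma_n^-$ and apply the standard theorem to whichever piece is integrable; the other piece contributes $+\infty$ to the limit in the required direction. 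Together with continuity of $x\mapsto |x|$ on $[-\infty,\infty]$, this also gives $|\omega_n|/n\to |E(\gamma_1)|$ a.s.

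For the Busemann function limit, I would combine the two identities of Proposition~\ref{pbuz} into the single relation
\[
B_{\veps_x\alpha}\bigl(U_n(z)\bigr)=-\veps_x\,\omega_n+B_{\veps_x\alpha}(z),
\]
valid for each $z\in\cald$ and every $n\in\zz_+$. Dividing by $n$, the initial term $B_{\veps_x\alpha}(z)/n$ tends to zero, and the first term tends to $-\veps_x\,E(\gamma_1)$. Since $\veps_x=\sign(E(\gamma_1))$, we have $\veps_x E(\gamma_1)=|E(\gamma_1)|$, yielding the first claim.

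For the hyperbolic distance limit, I would apply Lemma~\ref{zero}, which gives
\[
\Bigl|\frac{1}{n}d_{\mathfrak p}\bigl(0,U_n(z)\bigr)-\frac{|\omega_n|}{n}\Bigr|\leq \frac{d_{\mathfrak p}(0,z)}{n}.
\]
The right-hand side tends to $0$ as $n\to\infty$ (note $d_{\mathfrak p}(0,z)<\infty$ since $z\in\cald$), so the first claim reduces to $|\omega_n|/n\to|E(\gamma_1)|$, already established.

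There is no substantial obstacle here: Proposition~\ref{pbuz} and Lemma~\ref{zero} were designed precisely to translate asymptotics of $\omega_n$ into asymptotics of the Busemann function and the hyperbolic distance, respectively, so the argument is just Birkhoff's theorem packaged through those two tools. The only mild care needed is the treatment of the possibly infinite mean $E(\gamma_1)$, which is handled by the standard one-sided generalization of Birkhoff's theorem.
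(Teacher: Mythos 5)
Your proof is correct and follows essentially the same route as the paper, which states Theorem~\ref{escape} without a formal proof as an immediate consequence of the surrounding lemmas: Proposition~\ref{pbuz} for the Busemann function, Lemma~\ref{zero} for the hyperbolic distance, and Birkhoff's ergodic theorem for $\omega_n/n\to E(\gamma_1)$. Your care in treating the possibly infinite mean via the one-sided ergodic theorem is a sound refinement that the paper leaves implicit.
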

Our next result gives the rate of convergence in terms of the Euclidean distance. For $\delta >0$ and $z\in\cc$ let $N_\delta(z)=\{w\in\cc:|z-w|<\delta\}.$ Denote $\ol \cald_\delta:=\ol \cald\backslash \bigl(N_\delta(\alpha) \cap N_\delta(-\alpha)\bigr).$ We have:
\begin{theorem}
\label{ucon}
Suppose that Assumption~\ref{a1} is satisfied. If $\veps_x\neq 0,$ then for any $\delta>0,$ the following holds w.\,p.\,1:
\beq
\lim_{n\to\infty} \frac{1}{n} \log \Bigl(\sup_{z\in \ol \cald_\delta} \bigl|U_n(z)-\veps_x\alpha\bigr|\Bigr)=-\bigl|E(\gamma_1)\bigr|.
\feq
\end{theorem}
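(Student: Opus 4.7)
The plan is to reduce the statement to an explicit algebraic formula for $U_n(z)-\veps_x\alpha$ and then apply Birkhoff's ergodic theorem to the sequence $\gamma_n$. By the symmetry $\alpha\leftrightarrow -\alpha$ (equivalently, replacing $x_n$ by $-x_n$, which flips the sign of every $\gamma_n$ and swaps the two fixed points), I may assume $\veps_x=1$, so that $E(\gamma_1)\in(0,\infty]$ and $\omega_n/n\to E(\gamma_1)$ a.s. In particular $\omega_n\to+\infty$ a.s., and hence via \eqref{yrho}, $y_n\to 1$ a.s.

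The key identity is a direct computation from \eqref{r1} using $|\alpha|^2=1$:
\[
U_n(z)-\alpha=\frac{(z-\alpha)(1-y_n)}{1+y_n\bar\alpha z},\qquad z\in\ol\cald.
\]
Together with $1-y_n=2/(e^{\omega_n}+1)$ from \eqref{yrho}, this gives $\frac{1}{n}\log(1-y_n)\to -E(\gamma_1)=-|E(\gamma_1)|$ a.s.\ (interpreted as $-\infty$ when $E(\gamma_1)=+\infty$). For $z\in\ol\cald$ one has $|z-\alpha|\leq 2$, while the denominator is uniformly bounded below on $\ol\cald_\delta$: since $|1+y_n\bar\alpha z|=|\alpha+y_n z|\geq|\alpha+z|-(1-y_n)|z|\geq\delta-(1-y_n)$ for every $z\in\ol\cald_\delta$, and $y_n\to 1$ a.s., there exists a (random, finite) $N$ such that $|1+y_n\bar\alpha z|\geq\delta/2$ uniformly in $z\in\ol\cald_\delta$ whenever $n\geq N$. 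Hence, a.s.\ for $n\geq N$,
\[
\sup_{z\in\ol\cald_\delta}\bigl|U_n(z)-\alpha\bigr|\leq\frac{4(1-y_n)}{\delta},
\]
which yields $\limsup_n\frac{1}{n}\log\sup_{z\in\ol\cald_\delta}|U_n(z)-\alpha|\leq -|E(\gamma_1)|$ a.s.

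For the matching lower bound, I fix any $z_0\in\ol\cald_\delta$ (the claim being vacuous otherwise). Then $|z_0-\alpha|\geq\delta$ by the definition of $\ol\cald_\delta$, while $|1+y_n\bar\alpha z_0|\leq 2$, so the key identity gives the pointwise bound $|U_n(z_0)-\alpha|\geq\delta(1-y_n)/2$, and consequently $\liminf_n\frac{1}{n}\log\sup_{z\in\ol\cald_\delta}|U_n(z)-\alpha|\geq -|E(\gamma_1)|$. Combining the two estimates completes the proof. The only real obstacle is the uniform lower bound on the denominator $|1+y_n\bar\alpha z|$ in the upper estimate, and this is handled cleanly thanks to the a.s.\ convergence $y_n\to 1$ together with the uniform bound $|z+\alpha|\geq\delta$ on $\ol\cald_\delta$.
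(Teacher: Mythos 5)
Your proof is correct and follows essentially the same route as the paper: both start from the factorization $U_n(z)-\veps_x\alpha=\frac{(z-\veps_x\alpha)(1-\veps_x y_n)}{1+y_n\bar\alpha z}$ (the paper's \eqref{dist}), apply Birkhoff's ergodic theorem to get $\omega_n/n\to E(\gamma_1)$ and hence $\tfrac1n\log(1-\veps_x y_n)\to-|E(\gamma_1)|$ via \eqref{yg}, and then deduce the claim. The one thing you add is a careful, explicit justification of the uniformity in $z\in\ol\cald_\delta$ (bounding the denominator below by $\delta/2$ for large $n$ using $|z+\veps_x\alpha|\ge\delta$ and $y_n\to\veps_x$, and the numerator between $\delta$ and $2$), which the paper asserts but does not spell out; this is a genuine improvement in rigor over the paper's terse \eqref{distr} chain, and it correctly handles the case $E(\gamma_1)=\pm\infty$.
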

\begin{remark}
\label{without}
Without the uniformity of the convergence, the result in Theorem~\ref{ucon} follows directly from \eqref{thelimit} and \eqref{curious}.We give below a short self-contained proof that doesn't assume \eqref{thelimit} a-priori and, more importantly, yields the uniformity. In addition, the calculation can be generalized readily to include other modes of convergence besides the almost sure one. See, for instance, Theorem~\ref{ucon1} and Proposition~\ref{converge} below.
\end{remark}
\begin{proof}[Proof of Theorem~\ref{ucon}]
For $\veps\in\{-1,1\}$ write
\beqn
\label{dist}
U_n(z)-\veps\alpha=\frac{z+y_n\alpha-\veps(\alpha+y_nz)}{1+y_n\bar \alpha z}=\frac{(z-\veps \alpha)(1-\veps y_n)}{1+y_n\bar \alpha z}.
\feqn
Birkhoff's ergodic theorem implies that
\beq
\lim_{n\to\infty} \omega_n=\veps_x\cdot \infty,\qquad \as
\feq
It follows then from \eqref{yg} that
\beqn
\label{ylim}
\lim_{n\to\infty} y_n=\veps_x, \qquad \mbox{\rm a.\,s.}
\feqn and hence by virtue of \eqref{dist}, the convergence in \eqref{thelimit} holds for all $z\in \ol \cald \backslash \{-\alpha,\alpha\}.$ Furthermore, \eqref{ylim}, \eqref{dist} and \eqref{yg} imply that with probability one, uniformly on $\cald_\delta$ for any $\delta>0,$
\beqn
\nonumber
\lim_{n\to\infty} \frac{1}{n}\log |U_n(z)-\veps_x\alpha|&=&\lim_{n\to\infty} \frac{1}{n}\log(1-\veps_xy_n)=-\lim_{n\to\infty} \frac{1}{n}\log(1+\veps_x\omega_n)
\\
&=&
\label{distr}
-\veps_x\cdot \Bigl(\lim_{n\to\infty} \frac{1}{n}\log \omega_n\Bigr) =-|E(\gamma_1)|,
\feqn
where in the last step we used \eqref{rrw} and the ergodic theorem.
\end{proof}
Recall $G_\alpha$ from \eqref{ga}. One can consider $G_\alpha$ as a topological group by equipping it with the topology of compact convergence on the complete metric space $(\cald,d_{\mathfrak p})$ (which is the same as the topology of compact convergence on $\cald$ equipped with the usual planar topology). A natural compactification of $G_\alpha$ is obtained by adding to it two elements:
\beqn
\label{compa}
g_{+\infty}:=q_{1,\alpha}=\frac{z+\alpha}{1+\ol \alpha z}=\alpha\qquad \mbox{\rm and}\qquad g_{-\infty}:=q_{-1,\alpha}=\frac{z-\alpha}{1-\ol \alpha z}=-\alpha.
\feqn
Let $\calk$ and $\ol \calk$ be, respectively, the sets of all compact (closed and bounded in the usual planar topology) subsets of $\cald$ and $\ol \cald,$ and
\beq
\label{ka}
\calk_\alpha:=\bigl\{E\in\ol \calk: \{-\alpha,\alpha\}\cap E=\emptyset\bigr\}
\feq
By definition, for any $x\in \rr$ and a sequence of reals $x_n,$ $n\in\nn,$
\beqn
\label{conv}
\lim_{n\to\infty} g_{x_n}=g_x \qquad \mbox{\rm iff}\qquad \forall E\in \calk,~\lim_{n\to\infty}\sup_{z\in E} |g_{x_n}(z)-g_x(z)|=0.
\feqn
It is easy to check (see, for instance, \eqref{dist} above) that
\beqn
\label{bineq}
\lim_{n\to\infty} g_{x_n}=g_x \qquad \mbox{\rm iff}\qquad \lim_{n\to\infty} x_n=x.
\feqn
Thus the topology of $G_\alpha$ is inherited from $\rr.$ To justify the definitions in \eqref{compa} observe that the equivalence in \eqref{bineq} still holds true for $x\in\{-\infty,+\infty\}$ if \eqref{conv}  is understood as the definition of the convergence in $\ol G_\alpha$ with $\calk$ replaced by $\calk_\alpha.$
\begin{corollary}
\label{train}
Under the conditions of Theorem~\ref{ucon}, $\lim_{n\to\infty} U_n=g_{+\infty \cdot \veps_x}$ in the compact convergence topology induced by $\ol\cald\backslash \{-\alpha,\alpha\}.$
\end{corollary}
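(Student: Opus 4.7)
The plan is to reduce the statement to a direct application of Theorem~\ref{ucon}, after unpacking the definitions of the extended group $\ol G_\alpha$ and the relevant compact convergence topology. First I would note that by the conventions \eqref{compa}, the element $g_{+\infty\cdot \veps_x}$ is the constant function $\veps_x\alpha$ on $\ol\cald$, so the claim is precisely that $U_n\to \veps_x\alpha$ uniformly on every $E\in\calk_\alpha$, i.\,e., on every compact $E\subseteq\ol\cald$ avoiding $\{-\alpha,\alpha\}$.

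Next I would verify the routine topological step: for any $E\in\calk_\alpha$, compactness of $E$ and its disjointness from the compact pair $\{-\alpha,\alpha\}$ give some $\delta>0$ with $E\subseteq \ol\cald_\delta$ (where $\ol\cald_\delta$ is as in the statement of Theorem~\ref{ucon}). This step is where one should be slightly careful, since $E$ is allowed to touch $\partial\cald$; however only the two excluded points are problematic, so the positivity of $\mbox{dist}(E,\{-\alpha,\alpha\})$ is immediate from compactness.

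Having reduced to $E\subseteq \ol\cald_\delta$, I would invoke Theorem~\ref{ucon}: since the hypotheses of Theorem~\ref{ucon} are in force and $\veps_x\neq 0$ implies $|E(\gamma_1)|>0$ (interpreted as $+\infty$ when $E(\gamma_1)=\pm\infty$), the theorem yields
\beq
\lim_{n\to\infty}\frac{1}{n}\log\Bigl(\sup_{z\in \ol\cald_\delta}|U_n(z)-\veps_x\alpha|\Bigr)=-|E(\gamma_1)|<0\qquad \as,
\feq
so a fortiori $\sup_{z\in E}|U_n(z)-\veps_x\alpha|\to 0$ almost surely. By definition \eqref{conv}, interpreted with $\calk$ replaced by $\calk_\alpha$ as indicated in the paragraph following \eqref{bineq}, this is exactly the convergence $\lim_{n\to\infty}U_n=g_{+\infty\cdot \veps_x}$ in the compact convergence topology on $\ol G_\alpha$ induced by $\ol\cald\backslash\{-\alpha,\alpha\}$.

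There is essentially no hard step here; the whole content is bookkeeping between the extended topology on $\ol G_\alpha$ and the uniform decay estimate already established in Theorem~\ref{ucon}. The only point worth flagging explicitly in the write-up is the identification $g_{\pm\infty}\equiv \pm\alpha$ from \eqref{compa}, which is what makes the uniform smallness of $|U_n(z)-\veps_x\alpha|$ the correct notion of ``$U_n\to g_{+\infty\cdot \veps_x}$'' in this topology.
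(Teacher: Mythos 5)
Your proof is correct and follows exactly the route the paper leaves implicit: the corollary is stated with no separate argument because it is meant to be read off from Theorem~\ref{ucon} together with the definition \eqref{compa} and the remark (around \eqref{conv}--\eqref{bineq}) that convergence to $g_{\pm\infty}$ in $\ol G_\alpha$ means uniform convergence on each $E\in\calk_\alpha$. Your added observation that any compact $E\subseteq\ol\cald\backslash\{-\alpha,\alpha\}$ sits inside some $\ol\cald_\delta$ (reading the paper's $\ol\cald_\delta$ as $\ol\cald\backslash\bigl(N_\delta(\alpha)\cup N_\delta(-\alpha)\bigr)$, since the $\cap$ in its definition is evidently a typo) is precisely the one step the paper's brevity skips over.
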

Recall circles $H_z$ from \eqref{circle} and bipolar coordinate $\tau(z)$ from \eqref{bipolar}. In view of Lemmas~\ref{cinv} and~\ref{bipo}, assuming that $P(\gamma=0)\neq 1,$ we refer to the process $\bigl(U_n(z)\bigr)_{n\in\zz_+}$ restricted to $H_z$ as \emph{oscillating at infinity} if with probability one,
\beq
\limsup_{n\to\infty}\, \tau\bigl(U_n(z)\bigr)=-\liminf_{n\to\infty}\,\tau\bigl(U_n(z)\bigr) =+\infty.
\feq
We call the process restricted to $H_z$ \emph{transient to infinity} if with probability one,
\beq
\mbox{\rm either}\quad \lim_{n\to\infty} \tau\bigl(U_n(z)\bigr)=+\infty \quad \mbox{\rm or} \quad \lim_{n\to\infty} \tau\bigl(U_n(z)\bigr)=-\infty.
\feq
The following proposition asserts a standard random walk dichotomy for $U_n(z).$ Namely, it implies that for all $z\in\cald\backslash\{-\alpha,\alpha\},$ the process $U_n(z)$ is oscillating at at infinity if and only if $\omega_n$ on $\rr$ is oscillating, and is transient to infinity otherwise.
\begin{proposition}
\label{rec}
Let Assumption~\ref{a1} hold and assume in addition that $\veps_x=0.$ Then for any $z\in \cald\backslash\{-\alpha,\alpha\},$ the random walk $U_n(z)$ restricted to $H_z$ is oscillating at infinity.
\end{proposition}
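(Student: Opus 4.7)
The plan is to reduce Proposition~\ref{rec} to a statement about the one-dimensional random walk $\omega_n$ and then apply the standard dichotomy for stationary ergodic mean-zero sums. By Lemma~\ref{bipo}, $\tau\bigl(U_n(z)\bigr)=\tau(z)+\omega_n$ for every $n\in\zz_+$, so the orbit on $H_z$ oscillates at infinity if and only if $\limsup_n\omega_n=+\infty$ and $\liminf_n\omega_n=-\infty$ almost surely. Since $\veps_x=\sign\bigl(E(\gamma_1)\bigr)=0$ and $E(\gamma_1)$ is well-defined by Assumption (A3), we have $E(\gamma_1)=0$; Birkhoff's theorem then gives $\omega_n/n\to 0$ a.s., which is the first piece of information I would record.

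Let $\theta$ denote the canonical measure-preserving ergodic shift sending $\gamma_n$ to $\gamma_{n+1}$, and set $M_+:=\limsup_n\omega_n$ and $M_-:=\liminf_n\omega_n$, both valued in $[-\infty,+\infty]$. A direct computation using $\omega_n\circ\theta=\omega_{n+1}-\gamma_1$ gives $M_\pm\circ\theta=M_\pm-\gamma_1$, so the events $\{M_+=+\infty\}$ and $\{M_-=-\infty\}$ are shift-invariant, and by ergodicity each has probability $0$ or $1$. Atkinson's recurrence theorem for stationary ergodic mean-zero sums furnishes $\liminf_n|\omega_n|<\infty$ a.s., which rules out the drift scenarios $M_+=-\infty$ and $M_-=+\infty$. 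What remains is to exclude $M_+<+\infty$ a.s.\ and $M_->-\infty$ a.s., and this requires the implicit non-degeneracy $P(\gamma_1=0)\neq 1$, without which the very definition of ``oscillating at infinity'' is vacuous.

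Assume toward a contradiction that $M_+<+\infty$ a.s. Then $\gamma_1=M_+-M_+\circ\theta$ a.s., realizing $(\gamma_n)$ as a measurable coboundary and giving $\omega_n=M_+-M_+\circ\theta^n$. Since each $M_+\circ\theta^n$ is equidistributed with $M_+$, the sequence $(\omega_n)$ is tight, and this tightness, combined with non-degeneracy of $\gamma_1$ and the cocycle structure, is what I would exploit to force a contradiction. The symmetric argument applied to $-\gamma_n$ then gives $M_-=-\infty$ a.s., which together with $M_+=+\infty$ a.s.\ completes the proof.

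I expect the last step---ruling out the bounded/coboundary case---to be the main obstacle, since for general stationary ergodic sequences mean-zero sums can legitimately remain bounded. In the i.i.d.\ specialization the argument collapses to a one-line application of the Hewitt--Savage $0$--$1$ law: $M_+$ is a deterministic constant $c\in[-\infty,+\infty]$; the relation $c=c-\gamma_1$ combined with $P(\gamma_1\neq 0)>0$ forces $c\in\{-\infty,+\infty\}$; and Atkinson excludes $c=-\infty$. In the general stationary ergodic setting, however, closing the coboundary step requires more delicate input than ergodicity and recurrence alone.
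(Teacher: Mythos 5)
Your reduction via Lemma~\ref{bipo} to the assertion $\limsup_n\omega_n=+\infty=-\liminf_n\omega_n$ a.s., followed by an appeal to Atkinson's recurrence theorem \cite{ecycles}, is exactly the paper's route; the paper's proof is simply the two-sentence version of this reduction. What you have added, correctly and perceptively, is the observation that Atkinson only furnishes $\liminf_n|\omega_n|=0$ a.s.\ when $E(\gamma_1)=0$, which excludes one-sided escape to $\pm\infty$ but does \emph{not} by itself exclude the possibility that $\omega_n$ remains almost surely bounded. In the i.i.d.\ specialization your Hewitt--Savage argument closes the gap cleanly: a finite a.s.-constant value of $\limsup_n\omega_n$ forces $\gamma_1=0$ a.s., contradicting nondegeneracy. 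In the general stationary ergodic setting of Assumption~\ref{a1}, however, the coboundary obstruction you point to is genuine and cannot be argued away: taking $\gamma_n=f\circ\theta^n-f\circ\theta^{n-1}$ for a bounded, non-constant measurable $f$ on an ergodic base system produces a stationary ergodic sequence with $E(\gamma_1)=0$ and $P(\gamma_1=0)\neq 1$, yet $\omega_n=f\circ\theta^n-f$ stays bounded, so $\tau\bigl(U_n(z)\bigr)$ does not oscillate to $\pm\infty$. Thus the proposition as literally stated requires either an i.i.d.\ hypothesis or an explicit exclusion of the coboundary case, and the paper's proof --- which passes from ``recurrent'' to ``oscillating'' without addressing the bounded alternative --- glosses over precisely the step at which your argument stalls. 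You should be credited for diagnosing this accurately rather than faulted for the incompleteness it exposes.
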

\begin{proof}[Proof of Proposition~\ref{rec}]
If Assumption~\ref{a1} holds with $\veps_x=0,$ then the (generalized) random walk $\omega_n$ is recurrent \cite{ecycles}. Thus the proposition is a direct implication of Lemma~\ref{cinv} combined together with the result of Lemma~\ref{bipo}.
\end{proof}
We next consider the following general assumption:
\begin{assume}
\label{a3}
There exists a sequence of positive reals $(a_n)_{n\in\nn}$ and a random variable $W$ such that $\lim_{n\to\infty} a_n=+\infty$ and $\frac{\omega_n}{a_n}$ converges to $W$ in distribution as $n\to\infty.$
\end{assume}
In the above assumption we do not exclude the case that $W$ is degenerate. The most basic setup when Assumption~\ref{a3} holds true is described by the classical central limit theorem. If $(x_n)_{n\in\nn}$ is an i.\,i.\,d. sequence, $E(\gamma_1)=0$ and $E(\gamma_1^2)\in (0,\infty),$ then $W$ is a normal distribution and one can choose $a_n=\sqrt{n}.$ The CLT holds in fact in a functional form and even in that form can be extended to martingales, functionals of Markov chains, mixing and other ``weakly dependent in long range" sequences (see, for instance, \cite{clt4, mclt, clt5} and references therein). The theorem can also be extended to sequences with infinite variance in the domain of attraction of a stable law $W$ (see, for instance, \cite{borovs} and references therein).
\par
Let $F_W$ denote the distribution function of $W.$ It follows from \eqref{curious} that
\beqn
\label{tauclt}
\frac{1}{a_n}\log \Bigl|\frac{U_n(z)+\alpha}{U_n(z)-\alpha}\Bigr| \Longrightarrow W,
\feqn
where $\Rightarrow$ indicates the convergence in distribution. Our next next result is a refinement of \eqref{tauclt} that treats the numerator and denominator separately.
\begin{theorem}
\label{ucon1}
Let Assumption~\ref{a3} hold. Then for any $z\in\ol \cald\backslash\{-\alpha,\alpha\}$ and a real number $s>0$ we have
\beqn
\label{1}
\lim_{n\to\infty} P\Bigl(\frac{1}{a_n}\log |U_n(z)+\alpha|<-s\Bigr)=F_W(-s)
\feqn
and
\beqn
\label{3}
\lim_{n\to\infty} P\Bigl(\frac{1}{a_n}\log |U_n(z)-\alpha|<-s\Bigr)=1-F_W(s).
\feqn
\end{theorem}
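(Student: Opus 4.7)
The strategy is to express $|U_n(z)\pm\alpha|$ explicitly in terms of $\omega_n$ using \eqref{dist} and \eqref{yg}, and then reduce \eqref{1} and \eqref{3} to the hypothesized weak convergence $\omega_n/a_n\Rightarrow W$ by means of the continuous mapping theorem together with Slutsky's lemma. Setting $\veps=-1$ and $\veps=+1$ in \eqref{dist} yields
\begin{equation*}
|U_n(z)+\alpha|=\frac{|z+\alpha|\,(1+y_n)}{|1+y_n\ol\alpha z|}\qquad\mbox{\rm and}\qquad |U_n(z)-\alpha|=\frac{|z-\alpha|\,(1-y_n)}{|1+y_n\ol\alpha z|},
\end{equation*}
while \eqref{yg} gives $1+y_n=2/(1+e^{-\omega_n})$ and $1-y_n=2/(1+e^{\omega_n}).$ Taking logarithms and dividing by $a_n,$
\begin{equation*}
\frac{1}{a_n}\log|U_n(z)\pm\alpha|=\frac{\log|z\pm\alpha|+\log 2-\log|1+y_n\ol\alpha z|}{a_n}-\frac{1}{a_n}\log\bigl(1+e^{\mp\omega_n}\bigr).
\end{equation*}

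The first technical point is to bound $\bigl|\log|1+y_n\ol\alpha z|\bigr|$ uniformly in $n$; this is where the hypothesis $z\neq\pm\alpha$ is used. For $z\in\cald$ this is immediate from $1-|z|\leq|1+y_n\ol\alpha z|\leq 2.$ For $z\in\partial\cald\setminus\{-\alpha,\alpha\}$ one writes $\ol\alpha z=e^{i\theta}$ with $\sin\theta\neq 0$ and invokes
\begin{equation*}
|1+y_ne^{i\theta}|^2=(y_n+\cos\theta)^2+\sin^2\theta\geq\sin^2\theta
\end{equation*}
to conclude $|\sin\theta|\leq|1+y_n\ol\alpha z|\leq 2.$ Hence the entire first summand on the right-hand side of the previous display is a deterministic $O(1/a_n).$ For the dominant term I would use the elementary identity $\log(1+e^{-t})=(-t)^++\log(1+e^{-|t|}),$ $t\in\rr,$ together with the trivial bound $\log(1+e^{-|t|})\in[0,\log 2],$ which gives $\frac{1}{a_n}\log(1+e^{\mp\omega_n})=\max(\mp\omega_n/a_n,0)+O(1/a_n)$ deterministically. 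Substituting and using $-\max(-r,0)=\min(r,0),$
\begin{equation*}
\frac{1}{a_n}\log|U_n(z)+\alpha|=\min\bigl(\omega_n/a_n,0\bigr)+o(1),\qquad\frac{1}{a_n}\log|U_n(z)-\alpha|=\min\bigl(-\omega_n/a_n,0\bigr)+o(1).
\end{equation*}

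Since $r\mapsto\min(r,0)$ is continuous, Assumption~\ref{a3} combined with the continuous mapping theorem and Slutsky's lemma yields
\begin{equation*}
\frac{1}{a_n}\log|U_n(z)+\alpha|\Longrightarrow\min(W,0)\qquad\mbox{\rm and}\qquad\frac{1}{a_n}\log|U_n(z)-\alpha|\Longrightarrow\min(-W,0).
\end{equation*}
At any $s>0$ for which $F_W$ is continuous at $\pm s$ — the customary caveat for CDF convergence under weak convergence — one has $P\bigl(\min(W,0)<-s\bigr)=P(W<-s)=F_W(-s)$ and $P\bigl(\min(-W,0)<-s\bigr)=P(W>s)=1-F_W(s),$ establishing \eqref{1} and \eqref{3}. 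The only real obstacle is the uniform lower bound on $|1+y_n\ol\alpha z|$ for boundary points $z,$ which is dispatched by the elementary estimate above; everything else is routine manipulation.
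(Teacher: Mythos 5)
Your proof is correct and follows exactly the route the paper gestures at: expressing $|U_n(z)\pm\alpha|$ via \eqref{dist}, rewriting $1\mp y_n$ in terms of $e^{\omega_n}$ via \eqref{yg}, and passing to the limit using $\omega_n/a_n\Rightarrow W$ (the paper omits the details, referring to ``a suitable modification of the almost sure computation in \eqref{distr}''). Your deterministic decomposition $\log(1+e^{-t})=\max(-t,0)+\log(1+e^{-|t|})$ combined with the uniform two-sided bound on $|1+y_n\ol\alpha z|$ makes the reduction to Slutsky/continuous-mapping airtight, and your remark that the conclusion holds at continuity points of $F_W$ at $\pm s$ is an accurate reading of what \eqref{1}--\eqref{3} can actually assert; it is a caveat the paper leaves implicit.
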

The theorem is a counterpart of Theorem~\ref{ucon} under Assumption~\ref{a3}. We omit a formal proof of the theorem because it can be readily obtained through a suitable modification of the almost sure computation in \eqref{distr}. Indeed, \eqref{distr} is in essence an observation that by virtue of \eqref{dist} and \eqref{ylim}, $\bigl|U_n(z)+\veps\alpha\bigr|$ can only be small if $1-\veps y_n$ is small, in which case $\bigl|U_n(z)+\veps\alpha\bigr|\sim c_z(1-\veps y_n)$ as $n\to\infty,$ where $c_z>0$ is a constant that depends on $z.$ Here and henceforth the notation $u_n\sim v_n$ for two sequence of real numbers $(u_n)_{n\in\nn}$ and $(v_n)_{n\in\nn}$ stands for $\lim_{n\to\infty} \frac{u_n}{v_n}=1.$
\par
We note that in the generic situation when Assumption~\ref{a3} holds true and $W$ is distributed according to a stable law, the following limit exists:
\beq
\chi:=\lim_{s\to\infty}\frac{1-F_w(s)}{1-F_W(s)+F_W(-s)}\in [0,1].
\feq
In view of \eqref{1} and \eqref{3} this implies:
\beq
\lim_{s\to\infty} \lim_{n\to\infty} \frac{P\bigl(\log |U_n(z)+\alpha|<-sa_n\bigr)}{P\bigl(\log |U_n(z)-\alpha|<-sa_n\bigr)}=\frac{1-\chi}{\chi}.
\feq
An important aspect of oscillations of one-dimensional random walks is captured by laws of iterated logarithm. For instance, we have
\begin{proposition}
\label{plln}
Suppose that $x_n,$ $n\in\nn,$ form an i.\,i.\,d. sequence, $E(\gamma_1)=0,$ and $\sigma:=\bigl(E(\gamma_1^2)\bigr)^{1/2}\in(0,\infty).$ Then, for any $z\in \cald\backslash\{-\alpha,\alpha\},$ with probability one,
\beq
\limsup_{n\to\infty}\frac{B_\alpha\bigl(U_n(z)\bigr)}{\phi(n)}=
\limsup_{n\to\infty}\frac{B_{-\alpha}\bigl(U_n(z)\bigr)}{\phi(n)}=\limsup_{n\to\infty}\frac{d_{\mathfrak p}\bigl(0,U_n(z)\bigr)}{\phi(n)}=\sigma,
\feq
where $\phi(n):=\sqrt{2\pi n\log \log n}.$
\end{proposition}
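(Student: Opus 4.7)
\noindent\textbf{Proof proposal for Proposition~\ref{plln}.}
The plan is to reduce the three limsup identities to the classical Hartman--Wintner law of the iterated logarithm applied to $\omega_n=\sum_{i=1}^n \gamma_i$, using the deterministic comparisons established earlier in the section. Under the stated hypotheses the increments $\gamma_n$ are i.\,i.\,d., centered, and square integrable with variance $\sigma^{2}\in(0,\infty)$, so LIL yields, with probability one,
\[
\limsup_{n\to\infty}\frac{\omega_n}{\phi(n)}=\sigma\qquad\text{and}\qquad\liminf_{n\to\infty}\frac{\omega_n}{\phi(n)}=-\sigma,
\]
and consequently $\limsup_{n\to\infty}|\omega_n|/\phi(n)=\sigma$ almost surely.

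Next I would invoke Proposition~\ref{pbuz} to transport this statement onto the Busemann coordinates. Since $z\in\cald\backslash\{-\alpha,\alpha\}$, both $B_\alpha(z)$ and $B_{-\alpha}(z)$ are finite constants, and $\phi(n)\to\infty$, so
\[
\limsup_{n\to\infty}\frac{B_{-\alpha}\bigl(U_n(z)\bigr)}{\phi(n)}=\limsup_{n\to\infty}\frac{\omega_n}{\phi(n)}=\sigma
\]
and
\[
\limsup_{n\to\infty}\frac{B_{\alpha}\bigl(U_n(z)\bigr)}{\phi(n)}=\limsup_{n\to\infty}\frac{-\omega_n}{\phi(n)}=-\liminf_{n\to\infty}\frac{\omega_n}{\phi(n)}=\sigma,
\]
almost surely, which covers the first two equalities.

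For the third equality I would use Lemma~\ref{zero}, which gives the deterministic bound $\bigl|d_{\mathfrak p}(0,U_n(z))-|\omega_n|\bigr|\le d_{\mathfrak p}(0,z)$. Dividing by $\phi(n)$ and letting $n\to\infty$, the $O(1)$ error vanishes, so
\[
\limsup_{n\to\infty}\frac{d_{\mathfrak p}\bigl(0,U_n(z)\bigr)}{\phi(n)}=\limsup_{n\to\infty}\frac{|\omega_n|}{\phi(n)}=\sigma
\]
almost surely, as required.

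There is no real obstacle here: the content of the proposition is that the one-dimensional LIL for the driving walk $\omega_n$ transfers, term by term, to the three hyperbolic quantities of interest, and Proposition~\ref{pbuz} and Lemma~\ref{zero} do exactly this transfer up to additive constants that are dominated by $\phi(n)$. The only point worth a line of justification is that the hypotheses (i.\,i.\,d., mean zero, finite positive second moment) permit invoking Hartman--Wintner in its sharp form with the constant $\sigma$ on the right-hand side.
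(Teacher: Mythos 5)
Your proof matches the argument the paper implicitly relies on (the paper states the proposition without proof, flagging it only as an application of Lemma~\ref{bipo} and Lemma~\ref{zero}): reduce each limsup to the Hartman--Wintner LIL for $\omega_n$ through the exact additive relations $B_{\pm\alpha}\bigl(U_n(z)\bigr)=\mp\omega_n+B_{\pm\alpha}(z)$ from Proposition~\ref{pbuz}, and the two-sided bound $\bigl|d_{\mathfrak p}(0,U_n(z))-|\omega_n|\bigr|\le d_{\mathfrak p}(0,z)$ from Lemma~\ref{zero}, after which the constant shifts are killed by $\phi(n)\to\infty$. Using Proposition~\ref{pbuz} rather than the $\tau$-coordinate relation of Lemma~\ref{bipo} is equivalent and in fact slightly more direct for the Busemann part of the statement. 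One caution you should not let pass unremarked: the classical Hartman--Wintner normalizer is $\sqrt{2n\log\log n}$, whereas the paper defines $\phi(n)=\sqrt{2\pi n\log\log n}$; with that $\phi$ the LIL gives $\limsup \omega_n/\phi(n)=\sigma/\sqrt{\pi}$, not $\sigma$. Your sentence ``LIL yields \ldots\ $=\sigma$'' therefore silently propagates what is evidently a typo in the paper's $\phi$, and a careful proof should either drop the $\pi$ or replace the right-hand side by $\sigma/\sqrt{\pi}$.
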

We remark that the usual law iterated logarithm can be extended to sums of weakly dependent random variables (see, for instance, \cite{lln} and references therein) and (in a different form) to sequences with infinite variance \cite{borovs}. Many interesting almost sure results about simple random walk can be found in \cite{budapest}.
\section{A random walk on orthogonal pencils of circles}
\label{apencils}
In this section we consider a random motion on $\ol \cald$ which is intimately related to the evolution of the sequence $U_n(z)$ that we studied so far. The underlying idea is to adopt a pattern which is regular for the transition mechanism of random walks on $\zz^2$ and consider a discrete-time motion where two orthogonal coordinates changes in an alternating fashion, one at a time. The natural choice of the underlying coordinates within the context is the pair of bipolar coordinates $(\sigma,\tau)$ introduced in \eqref{bipolar}. We discuss here one, arguably the most simply related to $\bigl(U_n(z)\bigr)_{n\in\zz_+}$ example of such a motion. Several other examples will be discussed by the authors elsewhere \cite{ranew}.
\par
Before we proceed with the specific example, we recall that isoparametric curves in bipolar coordinates (i.\,e., curves along which only one coordinate in the pair $(\sigma,\tau)$ changes while another remains constant) constitute circles. More specifically, $\sigma$ remains constant on circles $H_z$ introduced in Lemma~\ref{cinv} (dashed circles in Fig.~\ref{fig2} below) whereas $\tau$ is constant on circles with centers lying on the line $l_\alpha,$ each one being orthogonal to every circle in the collection $(H_z)_{z\in\ol \cald}$ (dotted circles in Fig.~\ref{fig2}). In particular, circular arcs corresponding to constant $\tau$ are geodesic lines in the Poincar\'{e} disk model. The family of circles include two degenerate circles, namely the line $l_\alpha$ corresponding to $\sigma=\pi$ and an orthogonal to it which corresponds to $\tau=0.$ The family of coaxial circles corresponding to a constant $\sigma$ are sometimes referred as to an elliptic pencil of (Apollonian \cite{apollo}) circles. Similarly, the circles corresponding to a constant value of $\tau$ form a hyperbolic pencil \cite{amazing,localv,pencils}. The terminology is related to a representation of planar circles as projective lines in dimension 3.
\par
Recall the bipolar coordinates $(\sigma,\tau)$ from \eqref{bipolar}. Rather than working directly with $\sigma$ we will consider
\beq
\varsigma=\sign(\sigma)\cdot \pi-\sigma, \qquad \varsigma \in \Bigl[-\frac{\pi}{2},\frac{\pi}{2}\Bigr].
\feq
This version of the angular component is a continuous function of the Cartesian coordinates. Furthermore,
\beq
\frac{z+\alpha}{\alpha-z}=e^{\tau(z)+i\varsigma(z)},\qquad \forall~z\in\cald.
\feq
We remark that similarly customized bipolar coordinates are used, for instance, in \cite{bp1}.
\par
Fix any $p\in(0,1)$ and let $q:=1-p.$ Let $(u_n)_{n\in\nn}$ be a sequence of i.\,i.\,d. random variables, each one distributed uniformly over the interval $\bigl[-\frac{\pi}{2},\frac{\pi}{2}\bigr].$ Let $(c_n)_{n\in\zz}$ be a sequence of Bernoulli random variables (``coins") such that $P(c_n=1)=p$ and $P(c_n=0)=q.$ Assume that the sequences $(x_n)_{n\in\nn},$ $(u_n)_{n\in\nn},$ and $(c_n)_{n\in\nn}$ are independent each of other. Further, introduce the following  auxiliary functionals of the sequence  $c_n:$ $S_n=\sum_{k=1}^n c_k$ and $k_n=\max\{t\leq n:c_t=0\}.$ We will consider the following random walk $(Z_n)_{n\in\zz_+}$ on the state space $\ol \cald.$ Let $Z_n=(\varsigma_n,\tau_n)_{bipolar}$ denote the location of the random walk at time $n\in\zz_+$ in the bipolar coordinates $(\varsigma,\tau).$ At each instant of time $n\in\nn,$ the random walk changes its location from $Z_{n-1}$ to $Z_n$ according to the following rule and independently of past events:
\begin{itemize}
\item[-] If $c_n=1,$ then $Z_n=g_{\gamma_n}(Z_{n-1}),$ thus $\varsigma_n=\varsigma_{n-1}$ and $\tau_n=\tau_{n-1}+\gamma_{S_n}.$
\item[-] If $c_n=0,$ then $\tau_n=\tau_{n-1}$ and $\varsigma_n=u_n.$
\end{itemize}
Thus $Z_n$ is a Markov chain with transition kernel determined by
\beq
P\bigl(\varsigma_{n+1}\in A,\tau_{n+1}\in B\,\bigl|\,\varsigma_n,\tau_n\bigr)=
p\cdot {\bf 1}_{\{\varsigma_n\in A\}}\cdot P\bigl(g_{\gamma_1}(Z_{n-1})\in B\bigr)+q\cdot {\bf 1}_{\{\tau_n\in B\}}\cdot \frac{|A|}{\pi},
\feq
where $A\subset \bigl[-\frac{\pi}{2},\frac{\pi}{2}\bigr]$ and $B\subset \rr$ are Borel sets, $\odin_E$ denotes the indicator function of a set $E,$ and $|B|$ is the Lebesgue measure of the set $B.$  Furthermore, for $n\in\nn,$
\beq
(\varsigma_n,\tau_n)=c_n\cdot (\varsigma_{n-1},\tau_{n-1}+\gamma_{S_n})+(1-c_n)\cdot (u_n,\tau_n)
\feq
and hence
\beq
(\varsigma_{n},\tau_n)=(u_{k_n},\tau_0+\omega_{S_n}),\qquad n\in\nn.
\feq
In particular,
\beqn
\label{rw1}
Z_n=U_{S_n}({\mathfrak z}_n), \qquad \mbox{\rm where} \qquad {\mathfrak z}_n:=(\varsigma_n,\tau_n)_{bipolar}=(u_{k_n},\tau_0+\omega_{S_n})_{bipolar}.
\feqn
The last formula allows to apply the results of the previous sections to the random walk $Z_n.$ It turns out that even though, in contrast to $U_n(z),$ the distance $d_{\mathfrak p}(Z_n,0)$ between $Z_n$ and the origin can be arbitrarily large with a positive probability for any $n\in\nn$ independently of the value of $\omega_n,$  the asymptotic behavior of the sequence $Z_n$ is quite similar to that of $U_n$ (see Theorems~\ref{thm37} and~\ref{ucon3} and also Remark~\ref{rem35} below). In this sense, the former model of a random motion on $\ol \cald$ seems to offer an adequate generalization of the latter.
\par
\begin{figure}[!ht]
\begin{center}
\begin{tikzpicture}[scale =0.8]
\clip(-7,-4) rectangle (4,5);
\draw [line width=1pt] (-1.12,0.89) circle (4.013277961965755cm);
\draw[dashed] [line width=1pt] (-4.5432974634567795,-1.2046203657680743)-- (2.3075343970440345,2.9776800418359097);
\draw[dashed] [shift={(2.888431895775272,-5.676025934506512)},line width=1pt] plot[domain=1.637822793434394:2.59995036839282,variable=\t]({1*8.673181021344863*cos(\t r)+0*8.673181021344863*sin(\t r)},{0*8.673181021344863*cos(\t r)+1*8.673181021344863*sin(\t r)});
\draw[dashed] [shift={(0.3309117026156172,-1.5167803593575142)},line width=1pt]  plot[domain=1.166562704597435:3.077636788742166,variable=\t]({1*4.884194801215645*cos(\t r)+0*4.884194801215645*sin(\t r)},{0*4.884194801215645*cos(\t r)+1*4.884194801215645*sin(\t r)});
\draw[dashed] [shift={(-0.5558387159915673,-0.07467532154864191)},line width=1pt]  plot[domain=0.8264715007811323:3.417727992558468,variable=\t]({1*4.144466559847346*cos(\t r)+0*4.144466559847346*sin(\t r)},{0*4.144466559847346*cos(\t r)+1*4.144466559847346*sin(\t r)});
\draw[dashed] [shift={(-2.9084745154541007,3.751370730804398)},line width=1pt]  plot[domain=4.393760447518583:6.133624353000603,variable=\t]({1*5.218667820490378*cos(\t r)+0*5.218667820490378*sin(\t r)},{0*5.218667820490378*cos(\t r)+1*5.218667820490378*sin(\t r)});
\draw[dashed] [shift={(-1.7541942777881347,1.8741871277835132)},line width=1pt]  plot[domain=-2.3068635982392256:0.26787778439923976,variable=\t]({1*4.154293220591969*cos(\t r)+0*4.154293220591969*sin(\t r)},{0*4.154293220591969*cos(\t r)+1*4.154293220591969*sin(\t r)});
\draw[dotted] [shift={(-6.865410056788786,-2.246793069183498)},line width=1pt]  plot[domain=-0.16020842873872088:1.1596885902846057,variable=\t]({1*5.13990934404543*cos(\t r)+0*5.13990934404543*sin(\t r)},{0*5.13990934404543*cos(\t r)+1*5.13990934404543*sin(\t r)});
\draw[dotted] [shift={(144.40497742708402,78.57719125607755)},line width=1pt]  plot[domain=3.6076161548313506:3.6562740347475775,variable=\t]({1*164.96308296225456*cos(\t r)+0*164.96308296225456*sin(\t r)},{0*164.96308296225456*cos(\t r)+1*164.96308296225456*sin(\t r)});
\draw[dotted] [shift={(-5.835374337263382,-1.6693357214578461)},line width=1pt]  plot[domain=-0.3413313931989901:1.3358730295534569,variable=\t]({1*3.1561208595190156*cos(\t r)+0*3.1561208595190156*sin(\t r)},{0*3.1561208595190156*cos(\t r)+1*3.1561208595190156*sin(\t r)});
\draw[dotted] [shift={(6.276037434770659,5.333100386462109)},line width=1pt]  plot[domain=3.200518505652849:4.164594647381183,variable=\t]({1*7.9732138072121685*cos(\t r)+0*7.9732138072121685*sin(\t r)},{0*7.9732138072121685*cos(\t r)+1*7.9732138072121685*sin(\t r)});
\draw[dotted] [shift={(-11.013438819784929,-4.354498954772841)},line width=1pt]  plot[domain=0.12209367085772392:0.8527775761337828,variable=\t]({1*10.778138359322298*cos(\t r)+0*10.778138359322298*sin(\t r)},{0*10.778138359322298*cos(\t r)+1*10.778138359322298*sin(\t r)});
\draw[dotted] [shift={(-7.759737328241321,-2.6569697666132384)},line width=1pt]  plot[domain=-0.07031859914041672:1.0515874226105562,variable=\t]({1*6.6361213011589575*cos(\t r)+0*6.6361213011589575*sin(\t r)},{0*6.6361213011589575*cos(\t r)+1*6.6361213011589575*sin(\t r)});
\begin{scriptsize}
\draw [fill=black] (-1.12,0.89) circle (2.5pt);
\draw[color=black] (-0.72,0.86) node {$O$};
\draw [fill=black] (-4.5432974634567795,-1.2046203657680743) circle (0.5pt);
\draw[color=black] (-4.5432974634567795,-1.2046203657680743) node[below left] {$-\alpha$};
\draw [fill=black] (2.3075343970440345,2.9776800418359097) circle (0.5pt);
\draw[color=black] (2.3075343970440345,2.9776800418359097) node[above right] {$\alpha$};
\draw [fill=black] (-2.9434145567957066,0.743758795735773) circle (2.5pt);
\draw[color=black] (-3.28,1.08) node {$z$};
\draw [->,line width=.5pt] (-3.03,0.67) -- (-2.8887459673380973,0.8417715083982361);
\draw [->,line width=0.5pt] (-2.8887459673380973,0.8417715083982361) -- (-3.122233165183343,1.1316907591040475);
\draw [->,line width=0.5pt] (-3.122233165183343,1.1316907591040475) -- (-3.4504620698937623,1.4766489564854823);
\draw [->,line width=0.5pt] (-3.4504620698937623,1.4766489564854823) -- (-3.549389111261042,1.56894687013822);
\draw [->,line width=0.5pt] (-3.549389111261042,1.56894687013822) -- (-3.4382205085233055,1.6974717188426558);
\draw [->,line width=0.5pt] (-3.4382205085233055,1.6974717188426558) -- (-3.1524975186088247,1.9911610427359567);
\draw [->,line width=0.5pt] (-3.1524975186088247,1.9911610427359567) -- (-2.854027261095325,2.2517562540889147);
\draw [->,line width=0.5pt] (-2.854027261095325,2.2517562540889147) -- (-2.713934169893444,2.3604471548657555);
\draw [->,line width=0.5pt] (-2.713934169893444,2.3604471548657555) -- (-3.0051246627998545,2.711566291021777);
\draw [->,line width=0.5pt] (-3.0051246627998545,2.711566291021777) -- (-3.331759608783959,3.0721058659235028);
\draw [->,line width=0.5pt] (-3.331759608783959,3.0721058659235028) -- (-2.9588485591786635,3.3500864055136415);
\draw [->,line width=0.5pt] (-2.9588485591786635,3.3500864055136415) -- (-2.6393607125940366,3.541565591906161);
\draw [->,line width=0.5pt] (-2.6393607125940366,3.541565591906161) -- (-2.3261189338452084,2.9315337628319043);
\draw [->,line width=0.5pt] (-2.3261189338452084,2.9315337628319043) -- (-2.2037590788289316,2.69492342106163);
\draw [->,line width=0.5pt] (-2.2037590788289316,2.69492342106163) -- (-2.000435412575939,2.3038082534179978);
\draw [->,line width=0.5pt] (-2.000435412575939,2.3038082534179978) -- (-1.685929288241317,1.7037995200977665);
\draw [->,line width=0.5pt] (-1.685929288241317,1.7037995200977665) -- (-2.070643091610673,1.4589940435591726);
\draw [->,line width=0.5pt] (-2.070643091610673,1.4589940435591726) -- (-1.929289493859791,1.2315063895244087);
\draw [->,line width=0.5pt] (-1.929289493859791,1.2315063895244087) -- (-1.7234317899384504,0.8756807940541318);
\draw [->,line width=0.5pt] (-1.7234317899384504,0.8756807940541318) -- (-1.6173301050101725,0.679159444261243);
\draw [->,line width=0.5pt] (-1.7234317899384504,0.8756807940541318) -- (-1.2307130435191063,-0.13608053796333053);
\draw [->,line width=0.5pt] (-1.2307130435191063,-0.13608053796333053) -- (-0.8735086228963354,-1.1055837551966954);
\draw [->,line width=0.5pt] (-0.8735086228963354,-1.1055837551966954) -- (-0.48495468740027636,-0.9154441428170443);
\draw [->,line width=0.5pt] (-0.48495468740027636,-0.9154441428170443) -- (-0.04411133087790642,-0.6453761700495795);

\end{scriptsize}
\end{tikzpicture}
\caption{
Random walk on orthogonal pencils of circles. Every dotted circle intersects every dashed circle at a right angle. Every dotted circle corresponds to some $\tau \in \rr$ and every dashed circle corresponds to some $\varsigma \in [-\frac{\pi}{2},\frac{\pi}{2}]$. The arrows indicate a random trajectory $\{U_n(z):n=0,\ldots,24\}$ obtained through numerical simulation.  }
\label{fig2}
\end{center}
\end{figure}
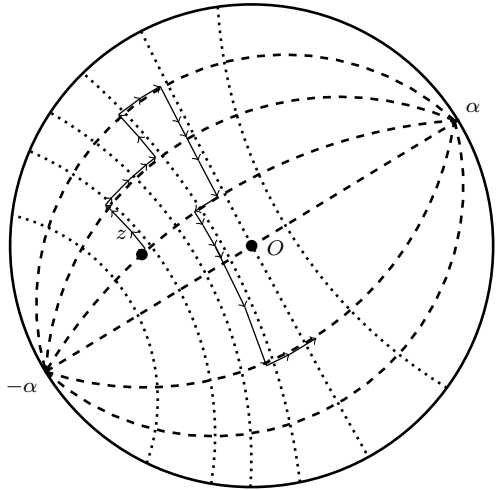
\par
Our main results in this section (Theorem~\ref{thm37} and Theorem~\ref{ucon3} below) are not affected by the assumption on the position or distribution of the initial value $\sigma_0$ (basically, because $\varsigma_n=\varsigma_{k_n}$ would be anyway distributed uniformly over $\bigr[-\frac{\pi}{2},\frac{\pi}{2}\bigr]$ for all $n$ large enough, so that $k_n\geq 1$). For simplicity, we will assume throughout the rest of the paper that $\varsigma_n$ are distributed uniformly over $\bigr[-\frac{\pi}{2},\frac{\pi}{2}\bigr]$ for all $n\in\zz_+,$ namely:
\begin{assume}
\label{33}
$\varsigma_0$ is distributed uniformly over $\bigr[-\frac{\pi}{2},\frac{\pi}{2}\bigr].$
\end{assume}
We note that the randomness of $S_n$ in \eqref{r1} is a simple ``removable" technicality since
\beqn
\label{r}
Z_n=\frac{{\mathfrak z}_n+\alpha y_{S_n}}{1+
\ol \alpha y_{S_n}{\mathfrak z}_n}=\frac{{\mathfrak z}_n+\alpha \witi y_n}{1+
\ol \alpha \witi y_n{\mathfrak z}_n},
\feqn
where, similarly to \eqref{yg},
\beqn
\label{r3}
\witi y_n:=y_{S_n}=\frac{e^{\witi \omega_n}-1}{e^{\witi \omega_n}+1} \qquad \mbox{\rm with}
\qquad \witi \omega_n:=\sum_{k=1}^n \gamma_k\cdot {\bf 1}_{\{S_k=S_{k-1}+1\}} ,
\feqn
and $\witi \gamma_k:=\gamma_k\cdot {\bf 1}_{\{S_k=S_{k-1}+1\}}$ are i.\,i.\,d. random variables. To illustrate the technicality arising from the randomness of ${\mathfrak z}_n$ in \eqref{rw1} and \eqref{r} we will prove the following almost sure result, an analogue of Theorem~\ref{escape} and Proposition~\ref{plln} for the sequence $Z_n.$
\begin{theorem}
\label{thm37}
Let $(Z_n)_{n\in\zz_+}$ be a random walk defined in \eqref{rw1}. Then for any $\tau_0\in \rr,$
\label{rwt}
\begin{enumerate}[(i)]
\item Under Assumption~\ref{a1},
with probability one,
\beq
\lim_{n\to\infty} \frac{1}{n}B_{\veps_x\alpha}\bigl(Z_n\bigr)=-p\cdot \bigl|E(\gamma_1)\bigr|
\quad \mbox{\rm and}\quad \lim_{n\to\infty} \frac{1}{n}d_{\mathfrak p}\bigl(0,Z_n\bigr)=p\cdot \bigl|E(\gamma_1)\bigr|.
\feq
\item Under the conditions of Proposition~\ref{plln}, with probability one,
\beq
\limsup_{n\to\infty}\frac{B_\alpha\bigl(U_n(z)\bigr)}{\phi(n)}=
\limsup_{n\to\infty}\frac{B_{-\alpha}\bigl(U_n(z)\bigr)}{\phi(n)}=\limsup_{n\to\infty}\frac{d_{\mathfrak p}\bigl(0,U_n(z)\bigr)}{\phi(n)}=\sigma\sqrt{p},
\feq
where $\sigma=\bigl(E(\gamma_1^2)\bigr)^{1/2}\in(0,\infty)$ and $\phi(n):=\sqrt{2\pi n\log \log n}.$
\end{enumerate}
\end{theorem}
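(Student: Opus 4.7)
The plan is to exploit the representation $Z_n=U_{S_n}(\mathfrak z_n)$ from \eqref{rw1} to reduce each claim to a statement about the one-dimensional random walk $\omega_{S_n}$ plus a correction term depending only on the random base point $\mathfrak z_n$. Applying Proposition~\ref{pbuz} to $U_{S_n}$ gives $B_{\veps_x\alpha}(Z_n)=-\veps_x\omega_{S_n}+B_{\veps_x\alpha}(\mathfrak z_n)$, while Lemma~\ref{zero} yields $\bigl|d_{\mathfrak p}(0,Z_n)-|\omega_{S_n}|\bigr|\le d_{\mathfrak p}(0,\mathfrak z_n)$. Both quantities of interest therefore split into a main linear-in-$\omega_{S_n}$ term and an error term that depends only on $\mathfrak z_n$; an analogous decomposition holds for $B_{-\alpha}(Z_n)$ via Proposition~\ref{pbuz}.

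For part~(i), the main term $\omega_{S_n}/n$ converges almost surely to $p\,E(\gamma_1)$: indeed, $S_n/n\to p$ by the strong law applied to the Bernoulli sequence $(c_n)$, $S_n\to\infty$ a.s., and $\omega_m/m\to E(\gamma_1)$ along deterministic $m$ by Birkhoff, so $\omega_{S_n}/n=(\omega_{S_n}/S_n)(S_n/n)\to p\,E(\gamma_1)$. Since $\veps_x=\sign E(\gamma_1)$, this supplies the declared constants $-p|E(\gamma_1)|$ and $p|E(\gamma_1)|$ for the main terms. For part~(ii), the same change of index applied to the classical law of the iterated logarithm for $\omega_n$ (together with $S_n\sim pn$ a.s., which makes the LIL transfer to the random index $S_n$) yields $\limsup_{n\to\infty}|\omega_{S_n}|/\sqrt{2n\log\log n}=\sigma\sqrt{p}$ a.s., matching the asserted limsup up to the factor $\sqrt{\pi}$ absorbed into the definition of $\phi(n)$.

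It remains to check that the correction terms are negligible, namely $d_{\mathfrak p}(0,\mathfrak z_n)$ and $|B_{\pm\alpha}(\mathfrak z_n)|$ are $o(n)$ in~(i) and $o(\phi(n))$ in~(ii), almost surely. The base point $\mathfrak z_n$ lies on the fixed isoparametric circle $\{\tau=\tau_0\}$ and is parametrized by the angular coordinate $\varsigma(\mathfrak z_n)=u_{k_n}$, which under Assumption~\ref{33} is uniform on $[-\pi/2,\pi/2]$ as soon as $k_n\ge 1$. A direct computation from \eqref{bpxy} shows that the Euclidean distance $\delta_n$ from $\mathfrak z_n$ to the pair $\{-\alpha,\alpha\}$ is comparable to $\pi/2-|u_{k_n}|$, so both $d_{\mathfrak p}(0,\mathfrak z_n)$ and $|B_{\pm\alpha}(\mathfrak z_n)|$ are dominated by $C\bigl(1+\log(1/\delta_n)\bigr)$. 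Uniformity of $u_{k_n}$ gives $P\bigl(d_{\mathfrak p}(0,\mathfrak z_n)>t\bigr)=O(e^{-ct})$, hence $\sum_n P(\mathrm{error}_n>\veps a_n)<\infty$ both for $a_n=n$ and for $a_n=\phi(n)$; Borel--Cantelli (which needs only summable marginal tails, not independence) combined with a standard passage $\veps\downarrow 0$ along rationals yields the required $o(n)$, respectively $o(\phi(n))$, bounds.

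The main obstacle relative to the arguments of Section~\ref{converge} is that, unlike for $U_n(z)$ with a fixed starting point $z$, the iteration $Z_n$ is re-initialized at a random $\mathfrak z_n$ that is \emph{not} contained in any compact subset of $\cald\setminus\{-\alpha,\alpha\}$. This is why the invariance-of-$\tau$ identities from Section~\ref{loco} must be supplemented by the exponential tail bound on $\log(1/\delta_n)$ described above. The weak dependence between successive $u_{k_n}$ (they stagnate while $c_n=1$) is immaterial, since only marginal tail probabilities enter the Borel--Cantelli step.
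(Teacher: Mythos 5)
Your decomposition $Z_n=U_{S_n}(\mathfrak z_n)$, the reduction of the main term to $\omega_{S_n}/n$ and $\omega_{S_n}/\phi(n)$ via the random-index LLN/LIL, and the idea of controlling the $\mathfrak z_n$-error by a tail bound plus Borel--Cantelli is the same route the paper takes; the extra detail you supply on the random-index passage is correct. There is, however, one mischaracterization in your error-term analysis: for fixed $\tau_0$ the point $\mathfrak z_n$ lives on the isoparametric circle $\{\tau=\tau_0\}$, which for finite $\tau_0$ stays a bounded distance away from the poles $\{-\alpha,\alpha\}$, so the quantity that is comparable to $\tfrac\pi2-|u_{k_n}|$ and that governs both $d_{\mathfrak p}(0,\mathfrak z_n)$ and $|B_{\pm\alpha}(\mathfrak z_n)|$ is $1-|\mathfrak z_n|$ (the distance to $\partial\cald$), not the distance to the pole pair. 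Your stated bound $C\bigl(1+\log(1/\delta_n)\bigr)$ with $\delta_n$ the pole distance would fail (it would give a constant), but with $\delta_n:=1-|\mathfrak z_n|$ the bound is exactly right, your exponential tail estimate $P\bigl(-\log\cos\varsigma_n>t\bigr)=O(e^{-t})$ is correct, and the Borel--Cantelli argument carries through for both $a_n=n$ and $a_n=\phi(n)$. The paper runs the same negligibility step slightly differently: it bounds $1-|\mathfrak z_n|^2$ from below via $\cos\varsigma_n$, then uses finiteness of $E\bigl((-\log\cos\varsigma_n)^{1/\beta}\bigr)$ for every $\beta>0$ rather than the exponential tail; the moment formulation is marginally more general (cf.\ the remark following the theorem, where the uniform law is relaxed to any law with that moment finite), but for uniform $\varsigma_n$ the two routes are equivalent.
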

\begin{proof}[Proof of Theorem~\ref{rwt}]
$\mbox{}$
\item[\emph{(i)}] The result will follow from \eqref{r} and \eqref{r3} along with the results in Proposition~\ref{pbuz} and Lemma~\ref{zero} provided that we are able to show that the random initial condition doesn't interfere and destroy the almost sure convergence, namely that w.\,p.\,1, \beqn
\label{show}
\lim_{n\to\infty} \frac{1}{n}B_{\veps_x\alpha}\bigl({\mathfrak z}_n\bigr)=\lim_{n\to\infty} \frac{1}{n}d_{\mathfrak p}\bigl(0,{\mathfrak z}_n\bigr)=0.
\feqn
Using the explicit formulas for the Poincar\'{e} distance and Buzemann functions, it is easy to check that \eqref{show} amounts to
\beq
\lim_{n\to\infty} \frac{1}{n}\log (1-|{\mathfrak z}_n|)=0\qquad \mbox{\rm a.\,s.}
\feq
To facilitate the proof of the second part of the theorem we will show a more general
\beqn
\label{lim7}
\lim_{n\to\infty} \frac{1}{n^\beta}\log (1-|{\mathfrak z}_n|)=0\qquad \mbox{\rm a.\,s.}
\feqn
for any $\beta>0.$ Toward this end observe that by virtue of \eqref{bpz} and \eqref{bpxy}, in the Cartesian coordinates we have
\beq
{\mathfrak z}_n\ol \alpha=i\cot \Bigl(\frac{\varsigma_n}{2}+i\frac{\tau_0}{2}\Bigr)=\frac{\sinh \tau_0}{\cosh \tau_0+\cos \varsigma_n}\pm i\frac{\sin \varsigma_n}{\cosh \tau_0+\cos \varsigma_n},
\feq
and hence
\beq
|{\mathfrak z}_n|^2&=&
\frac{\sin^2\varsigma_n+\sinh^2 \tau_0}{(\cosh \tau_0+\cos\varsigma_n)^2} .
\feq
Therefore,
\beq
1-|{\mathfrak z}_n|^2
=\frac{2\cos \varsigma_n}{\cosh \tau_0+\cos\varsigma_n}\geq \frac{2\cos \varsigma_n}{\cosh \tau_0+1} .
\feq
Thus, in order to prove \eqref{lim7} it suffices to show that, w.\,p.1
\beq
\lim_{n\to\infty} \frac{1}{n^\beta} \log \cos \varsigma_n=0,
\feq
or, equivalently,
\beq
P\Bigl(\frac{1}{n^\beta} \log \cos \varsigma_n<-\veps~\mbox{\rm i.\,o.}\Bigr)=0,\qquad \forall~\veps>0.
\feq
By the Borel-Cantelli lemma, the last assertion is implied by the following one
\beq
\sum_{n=1}^\infty P\bigl((-\log \cos \varsigma_n)^{1/\beta} >\delta n\bigr),\qquad \forall~\delta>0,
\feq
which in turn is equivalent to
\beqn
\label{esigma-11}
E\bigl((-\log \cos \varsigma_n)^{1/\beta}\bigr)<\infty.
\feqn
Let $s_n=\pi/2-|\varsigma_n|$ and let $\ol s\in (0,\pi/2)$ be such that $\sin \ol s=\frac{\ol s}{2}.$ Then  $\sin s_n>\frac{\ol s}{2}$ for $s_n\in (0,\ol s),$ and hence
\beq
E\bigl((-\log \cos \varsigma_n)^{1/\beta}\bigr)
=E\bigl((-\log \sin s_n)^{1/\beta}\bigr)<\infty.
\feq
if $\int_0^{\ol s} (-\log s)^{1/\beta}ds <\infty.$ Since the latter inequality holds true, so is \eqref{lim7}. The proof of part (i) of the theorem is complete.
\item[\emph{(ii)}]
The proof of part (ii) is similar. Indeed, in view of \eqref{r} and \eqref{r3} along with the results in Proposition~\ref{pbuz} and Lemma~\ref{zero} we only need to show that w.\,p.\,1,
\beq
\lim_{n\to\infty} \frac{1}{\phi(n)}B_{-\veps_x\alpha}\bigl({\mathfrak z}_n\bigr)=\lim_{n\to\infty} \frac{1}{\phi(n)}d_{\mathfrak p}\bigl(0,{\mathfrak z}_n\bigr)=0.
\feq
The latter identities follow from \eqref{lim7} with any $\beta<1/2.$
\end{proof}
\begin{remark}
\label{rem35}
We remark that the conclusions of Theorem~\ref{thm37} remains true for any distribution of i.\,i.\,d. random variables $\varsigma_n$ as long as the condition \eqref{esigma-11} is satisfied.
\end{remark}
Recall $\veps_x$ from \eqref{eps}. We conclude with a result showing that the assertions of Theorems~\ref{ucon} and~\ref{ucon1} remain true for the sequence $Z_n$ in place of $U_n(z).$ More specifically, we have the following:
\begin{theorem}
\label{ucon3}
$\mbox{}$
\begin{enumerate}[(i)]
\item Suppose that Assumption~\ref{a1} is satisfied. If $\veps_x\neq 0,$ then
\beq
\lim_{n\to\infty} \frac{1}{n} \log \bigl|Z_n-\veps_x\alpha\bigr|=-p\cdot \bigl|E(\gamma_1)\bigr|,\qquad \as
\feq
Moreover, the convergence is uniform on compact intervals in $\tau_0.$
\item Let Assumption~\ref{a3} hold. Then for all $\tau_0\in\rr$ and $s>0$ we have
\beq
\lim_{n\to\infty} P\Bigl(\frac{1}{a_n}\log |Z_n+\alpha|<-s\Bigr)=F_W(-s)
\feq
and
\beq
\lim_{n\to\infty} P\Bigl(\frac{1}{a_n}\log |Z_n-\alpha|<-s\Bigr)=1-F_W(s).
\feq
\end{enumerate}
\end{theorem}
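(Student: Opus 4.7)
The approach is to reduce both parts to their counterparts for $U_n(z)$ (Theorems~\ref{ucon} and~\ref{ucon1}) via the representation \eqref{r}, while controlling the extra randomness contributed by the ``initial point'' $\mathfrak{z}_n$. Applying the algebraic manipulation \eqref{dist} to $Z_n=U_{S_n}(\mathfrak{z}_n)$ yields the basic identity
\[
Z_n - \veps\alpha \;=\; \frac{(\mathfrak{z}_n-\veps\alpha)(1-\veps\widetilde y_n)}{1+\widetilde y_n\bar\alpha\,\mathfrak{z}_n}, \qquad \veps\in\{-1,+1\},
\]
so that $\log|Z_n-\veps\alpha|$ decomposes into a main term $\log|1-\veps\widetilde y_n|$, an ``initial'' term $\log|\mathfrak{z}_n-\veps\alpha|$, and a ``denominator'' term $-\log|1+\widetilde y_n\bar\alpha\,\mathfrak{z}_n|$. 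In both (i) and (ii) the plan is to show that the main term supplies the claimed limit while the other two are negligible at the appropriate scale.

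For part (i), first use the fact that $(c_n)$ is i.\,i.\,d.\ and independent of $(x_n)$, so the joint process $(x_n,c_n)$ is stationary and ergodic. Birkhoff's theorem applied to $\widetilde\gamma_k=\gamma_k\mathbf{1}_{\{c_k=1\}}$ then yields $\widetilde\omega_n/n\to p\,E(\gamma_1)$ a.s., hence $\widetilde y_n\to\veps_x$ a.s., and the same computation as in \eqref{distr} (using $1-\veps_x\widetilde y_n=2/(e^{|\widetilde\omega_n|}+1)$ eventually) gives $n^{-1}\log|1-\veps_x\widetilde y_n|\to -p\,|E(\gamma_1)|$. For the error terms, observe that the $\tau=\tau_0$ isoparametric curve stays at a positive distance from $\pm\alpha$ for every finite $\tau_0$, so $\log|\mathfrak{z}_n-\veps_x\alpha|$ is deterministically bounded; and the denominator term is controlled from below by $\log(1-|\mathfrak{z}_n|)$, which by the estimate \eqref{lim7} already established in the proof of Theorem~\ref{thm37} satisfies $n^{-1}\log(1-|\mathfrak{z}_n|)\to 0$ a.s. Uniformity in $\tau_0$ on a compact interval $[a,b]$ is essentially automatic: the main term does not depend on $\tau_0$, the initial term's deterministic bound is continuous in $\tau_0$, and the lower bound $1-|\mathfrak{z}_n|^2\ge 2\cos\varsigma_n/(\cosh\tau_0+1)$ used in the proof of \eqref{lim7} isolates the $\tau_0$-dependence in a bounded deterministic factor on $[a,b]$.

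For part (ii) the same decomposition reduces the problem to the distributional statement $a_n^{-1}\log|1-\veps\widetilde y_n|\Rightarrow -\max(\veps W,0)$ (from which the stated $F_W(-s)$ and $1-F_W(s)$ drop out by the computation behind Theorem~\ref{ucon1}), plus $o_P(a_n)$ estimates for the two error terms. The latter are immediate: for fixed $\tau_0$, the distribution of $\mathfrak{z}_n$ depends only on $\tau_0$ and on $\varsigma_n\sim U[-\pi/2,\pi/2]$, hence both $\log|\mathfrak{z}_n-\veps\alpha|$ and $\log(1-|\mathfrak{z}_n|)$ are tight in $n$, and dividing by $a_n\to\infty$ sends them to $0$ in probability; Slutsky's theorem then produces the two limits. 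The main obstacle is the distributional behavior of the thinned partial sum: Assumption~\ref{a3} is stated for $\omega_n$ but what is actually used is the analogous assertion for $\widetilde\omega_n$. When the $\gamma_k$ are i.\,i.\,d.\ and $a_n$ is regularly varying of index $\rho>0$, this is automatic from $\widetilde\omega_n\stackrel{d}{=}\omega_{S_n}$ together with $S_n/n\to p$ a.s.\ via a standard conditioning argument, yielding $\widetilde\omega_n/a_n\Rightarrow p^{\rho}W$ which can be absorbed into the normalization of $a_n$; in the general stationary-ergodic setting one should simply read Assumption~\ref{a3} as applying to the thinned walk. Granted this, the rest of the derivation parallels the proof of Theorem~\ref{ucon1} verbatim.
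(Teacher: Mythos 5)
Your proof is correct, but it takes a genuinely different route from the paper. The paper's proof rests on a monotonicity/sandwich observation: for fixed $\tau_0$ the Euclidean distance $|U_n(z)-\veps\alpha|$ is monotone in $|\varsigma|$, so $|U_{S_n}(z_-)-\veps\alpha|\le |Z_n-\veps\alpha|\le |U_{S_n}(z_+)-\veps\alpha|$ with deterministic endpoints $z_\pm=(0,\tau_0)_{bipolar}$ and $(\pi/2,\tau_0)_{bipolar}$, and the two parts then follow by squeezing between the conclusions of Theorems~\ref{ucon} and~\ref{ucon1} applied to the thinned walk. You instead apply the algebraic identity \eqref{dist} directly to $Z_n=U_{S_n}(\mathfrak z_n)$ and show that the ``initial'' and ``denominator'' terms are negligible at scale $n$ (resp.\ $a_n$), using the boundedness of $\log|\mathfrak z_n-\veps\alpha|$ on the $\tau=\tau_0$ circle and the estimate \eqref{lim7}. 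Both routes are sound; the paper's is shorter but requires the (unverified-in-text, see ``see Fig.~\ref{fig2}'') monotonicity, while yours is a little more computation but more self-contained and makes the uniformity in $\tau_0$ transparent. Finally, you correctly flag a point the paper passes over silently in both its own proof and yours: the limit theorems being invoked are for $\omega_n$, whereas what is actually used is the thinned walk $\widetilde\omega_n$. This is harmless in part~(i) (ergodic theorem gives $\widetilde\omega_n/n\to pE(\gamma_1)$ directly), but in part~(ii) Assumption~\ref{a3} must be read as applying to $\widetilde\omega_n$, or one needs a supplementary argument (e.g.\ $S_n/n\to p$ plus regular variation of $a_n$ in the i.\,i.\,d.\ case) to transfer it; your discussion of this is accurate.
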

\begin{proof}[Proof of Theorem~\ref{ucon3}]
Observe that for all $z\in\cald$ and $\veps\in\{-1,1\},$ we have (see Fig.~\ref{fig2}):
\beqn
\label{31}
|U_n(z_-)-\veps\alpha|\leq |U_n(z)-\veps\alpha|\leq |U_n(z_+)-\veps\alpha|,
\feqn
where given $z=(\varsigma_0,\tau_0)_{bipolar},$ we define $z_-=(0,\tau_0)_{bipolar}$ and $z_+=(\pi/2,\tau_0)_{bipolar}.$
To verify claims (i) and (i) in the theorem, apply, respectively, Theorem~\ref{ucon} and Theorem~\ref{ucon3}, taking in account \eqref{rw1} and \eqref{r3} along with \eqref{31}.
\end{proof}
\section{Numerical simulations of $Z_n$}
\label{simul}
This section contains results of the numerical simulation of the random walk $Z_n$ introduced in Section~\ref{apencils}. In all the scenarios reported below $\alpha$ is taken to be equal to 1, the number of steps is $n=300,000$ and $\varsigma_0$ is distributed uniformly over $\bigl[-\frac{\pi}{2},\frac{\pi}{2}\bigr]$  with parameter $p$ varying from case to case. Each figure below represents one random realization of the random walk with parameters as indicated above and further specified in the caption.
\par
In Fig.~\ref{fig3}, $x_n$ is distributed uniformly over the interval $(-1,1),$ and hence the underlying random motion is recurrent. When $p=0.5$ most of the observations are located near the poles $\pm \alpha.$ The more $p$ deviates from $0.5,$ the more the cluster of observations appears to have a structure resembling a disjoint union of circles from one dominant pencil (dashed lines in Fig.~\ref{fig0} if $p$ is close to one and dotted lines if $p$ is close to zero).
\\
In Fig.~\ref{fig4}, $x_n$ has a triangular density with mode $0.1$ supported on $(-1,1),$ namely in these three cases we have
\beq
P(x_n\in dx)=
\left\{
\begin{array}{lcc}
\frac{100}{121}(x+1)dx&\mbox{\rm if}&-1<x<\frac{21}{100},\\
$\mbox{}$&&
\\
\frac{100}{79}(1-x)dx&\mbox{\rm if}&\frac{21}{100}<x<1.
\end{array}
\right.
\feq
For this distribution, $\veps_x=E(\gamma_1) \approx 0.0781,$ and therefore the random walk is with probability one attracted to $\alpha$ by virtue of part (i) of Theorem~\ref{thm37} (since $\lim_{n\to\infty} B_\alpha (Z_n)=\infty$). It appears (see Fig.~\ref{fig4}(c) below) that if $p$ is large, then for an ``intermediately large" number of steps $n$ there is a ``thick" cluster present around the repelling fixed point $\alpha=-1.$ The explanation might be provided by \eqref{bipolar} and Lemma~\ref{bipo}, the fluctuations of $U_n$ measured in the usual Euclidean distance are much large in magnitude near the poles $\pm \alpha$ than they are at points of the trajectory located far away from the boundary.\\
\section*{Acknowledgment} 
We would like to thank the anonymous referee for their very helpful comments and suggestions.
\begin{figure}
\centering
\begin{tabular}{ccc}
\includegraphics[scale = 0.6]{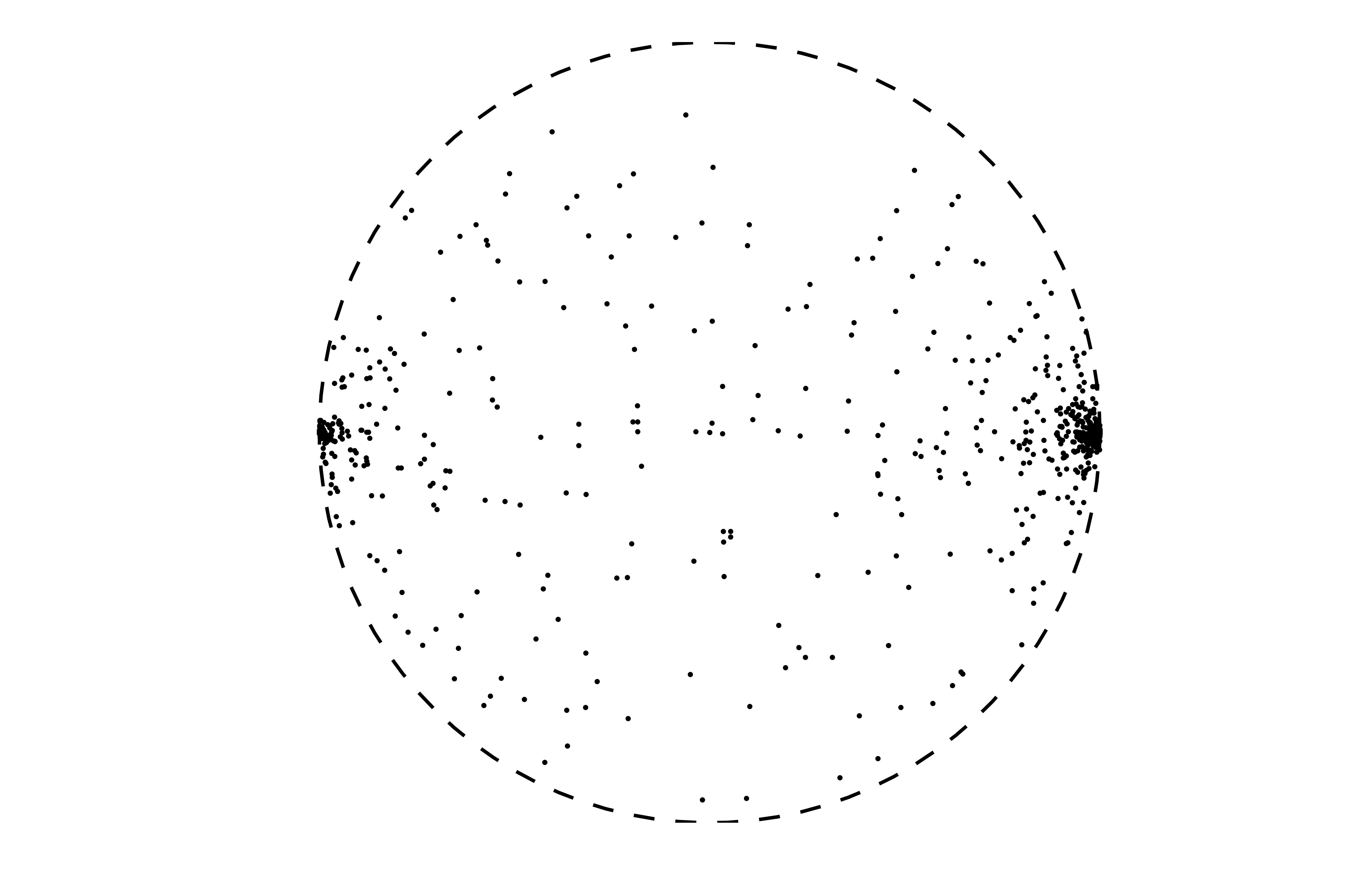} &
\includegraphics[scale = 0.6]{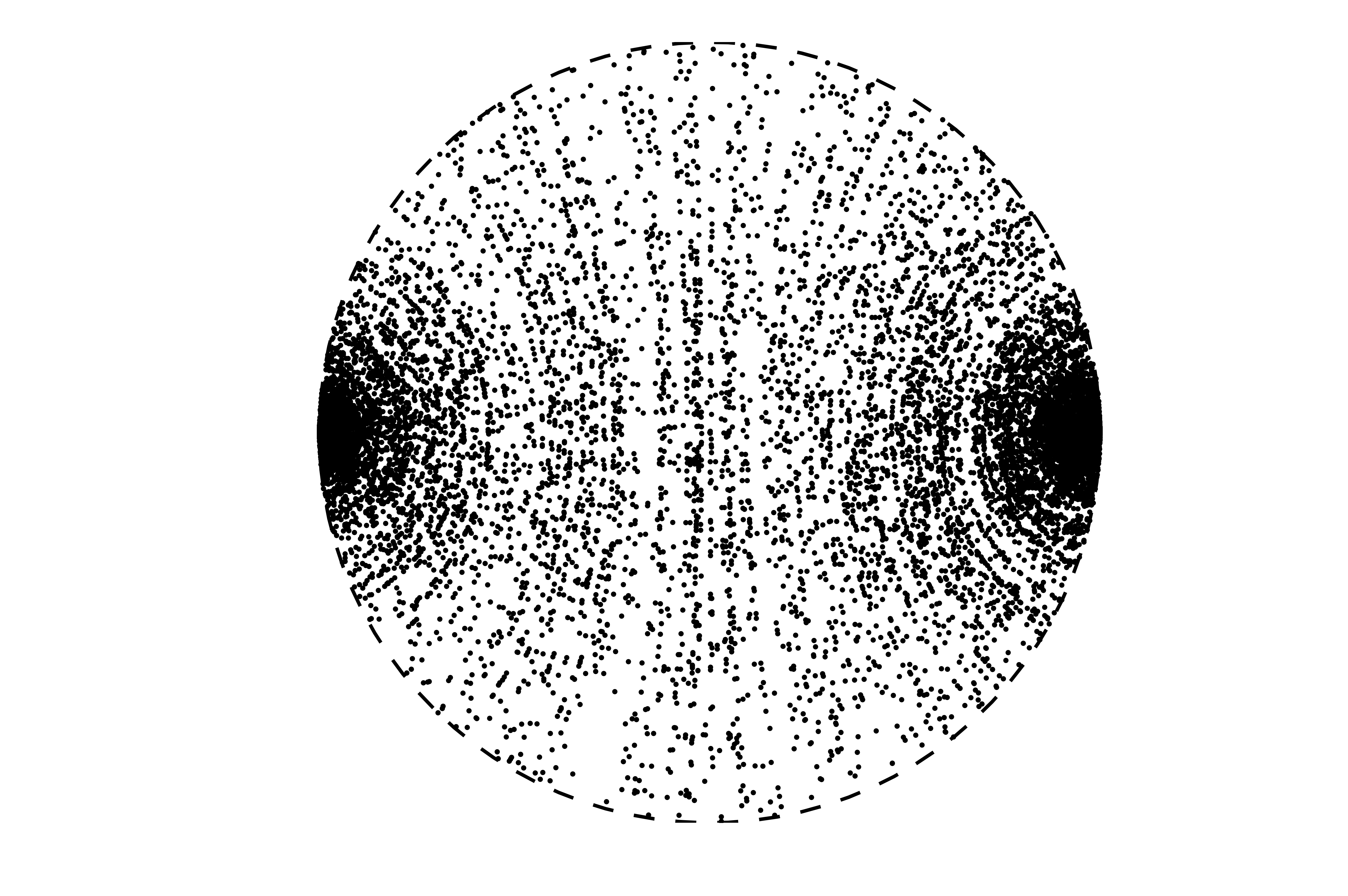} \\
(a) $p=0.5$ & (b) $p=0.1$
\end{tabular}
\begin{tabular}{c}
\includegraphics[scale = 0.6]{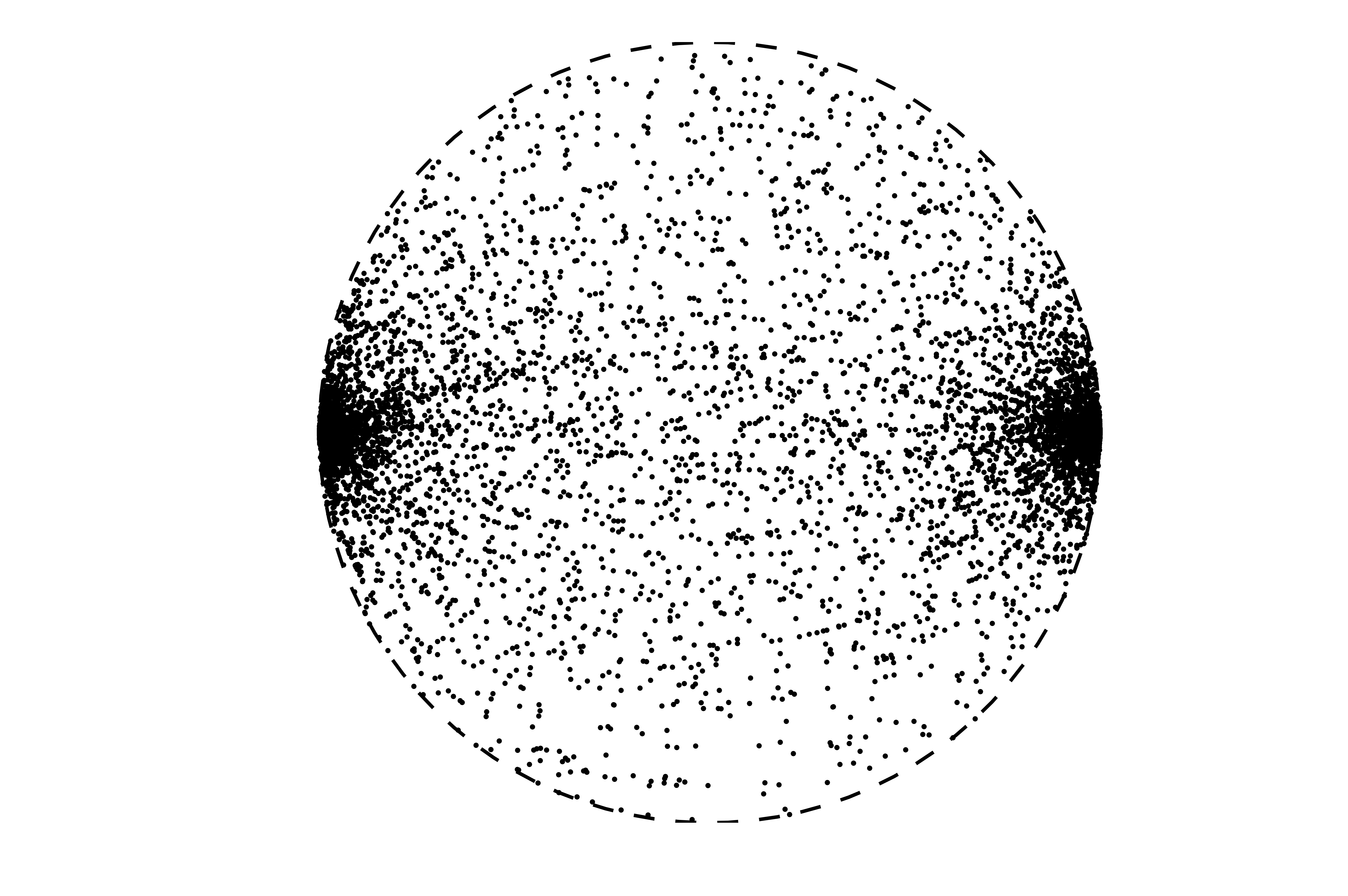} \\
(c) $p=0.9$
\end{tabular}
\caption{A realization of the random walk on Apollonian circles with $\alpha=1$ (300,000 steps). Here we have $x_n$ are i.i.d drawn from uniform distribution on $(-1,1),$ and $\varsigma_0$ is distributed uniformly over $\bigr[-\frac{\pi}{2},\frac{\pi}{2}\bigr].$}
\label{fig3}
\end{figure}

\begin{figure}
\centering
\begin{tabular}{cc}
\includegraphics[scale = 0.6, angle = 180]{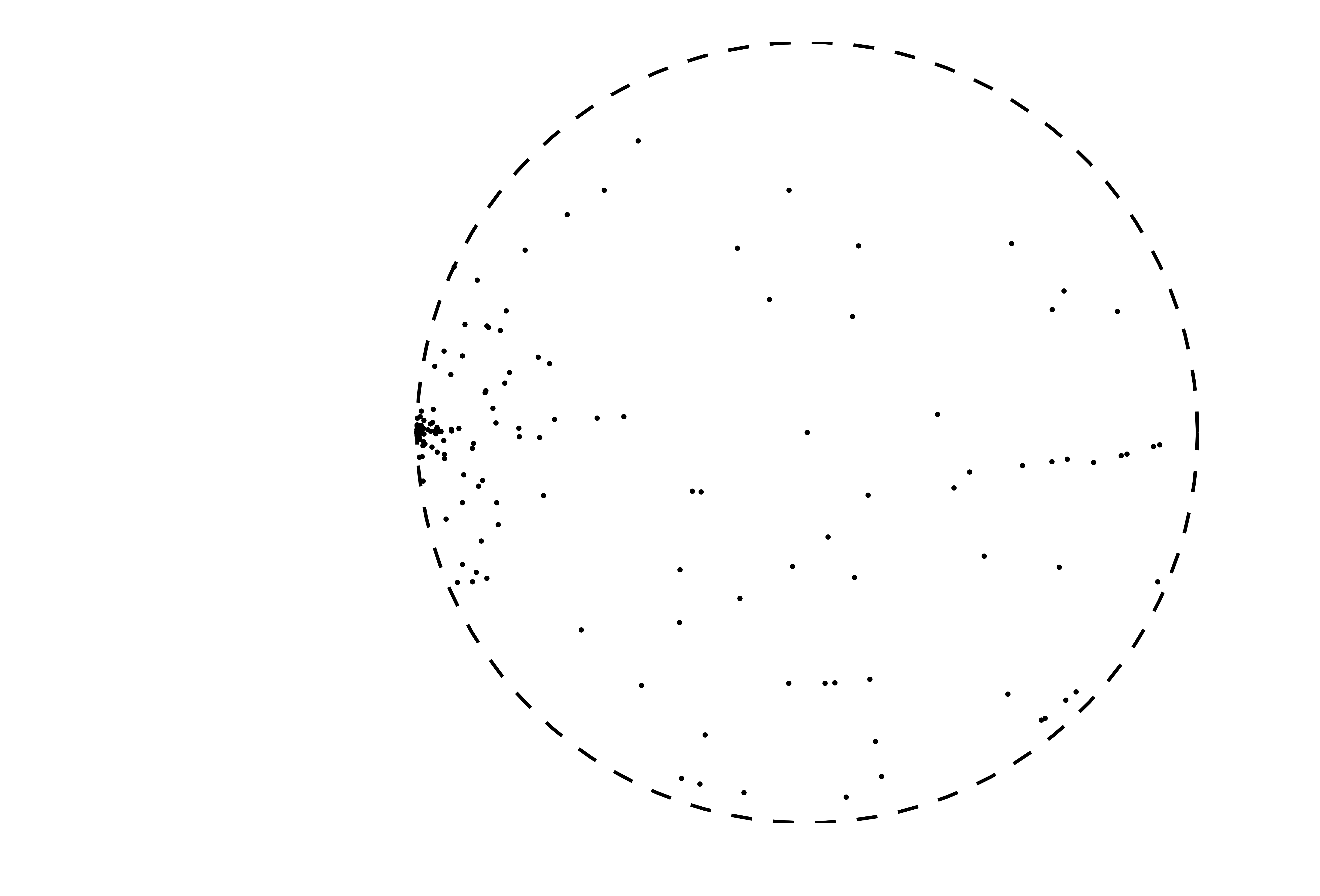} &
\includegraphics[scale = 0.6, angle = 180]{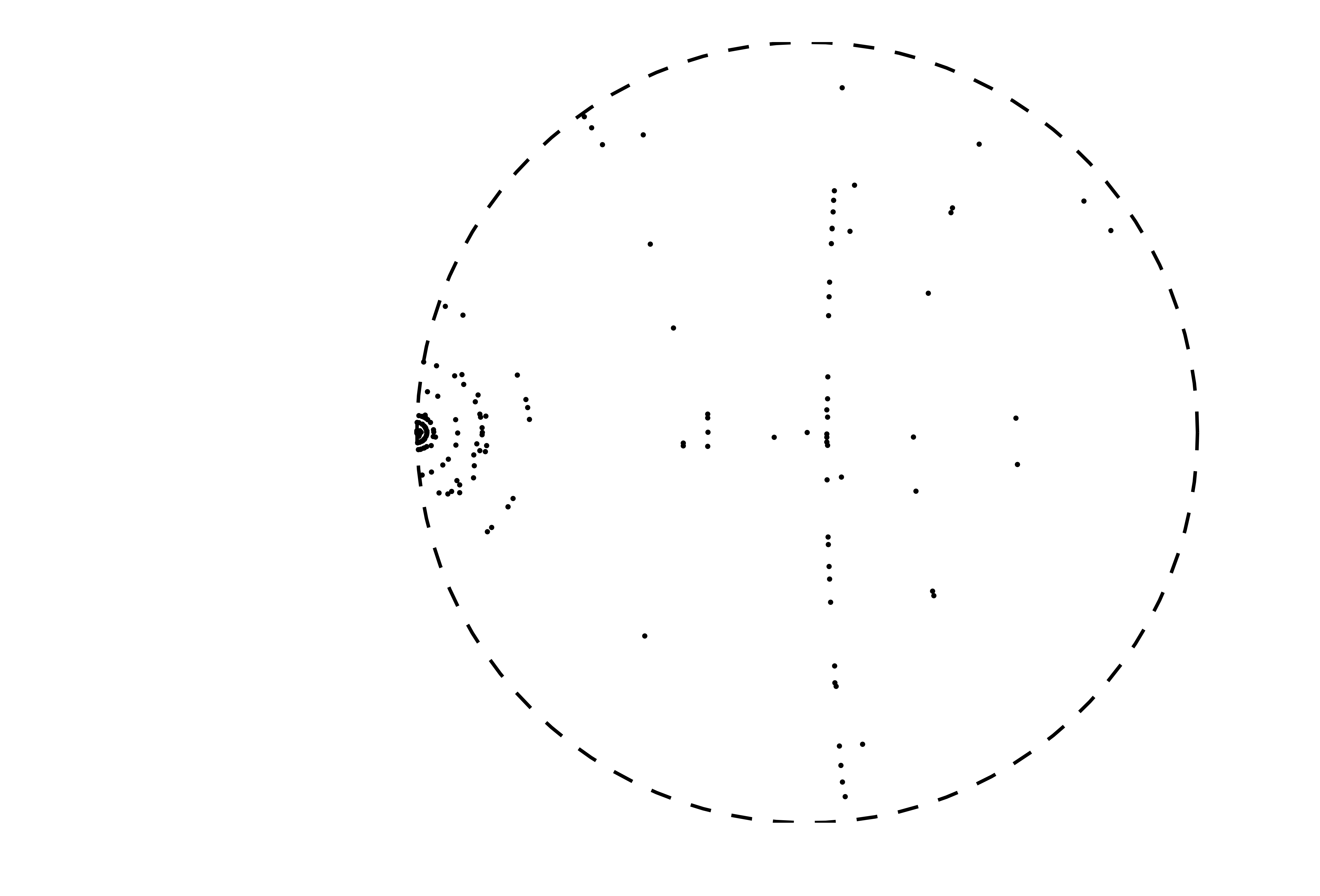} \\
(a) $p=0.5$ & (b) $p=0.1$
\end{tabular}
\begin{tabular}{c}
\includegraphics[scale = 0.6]{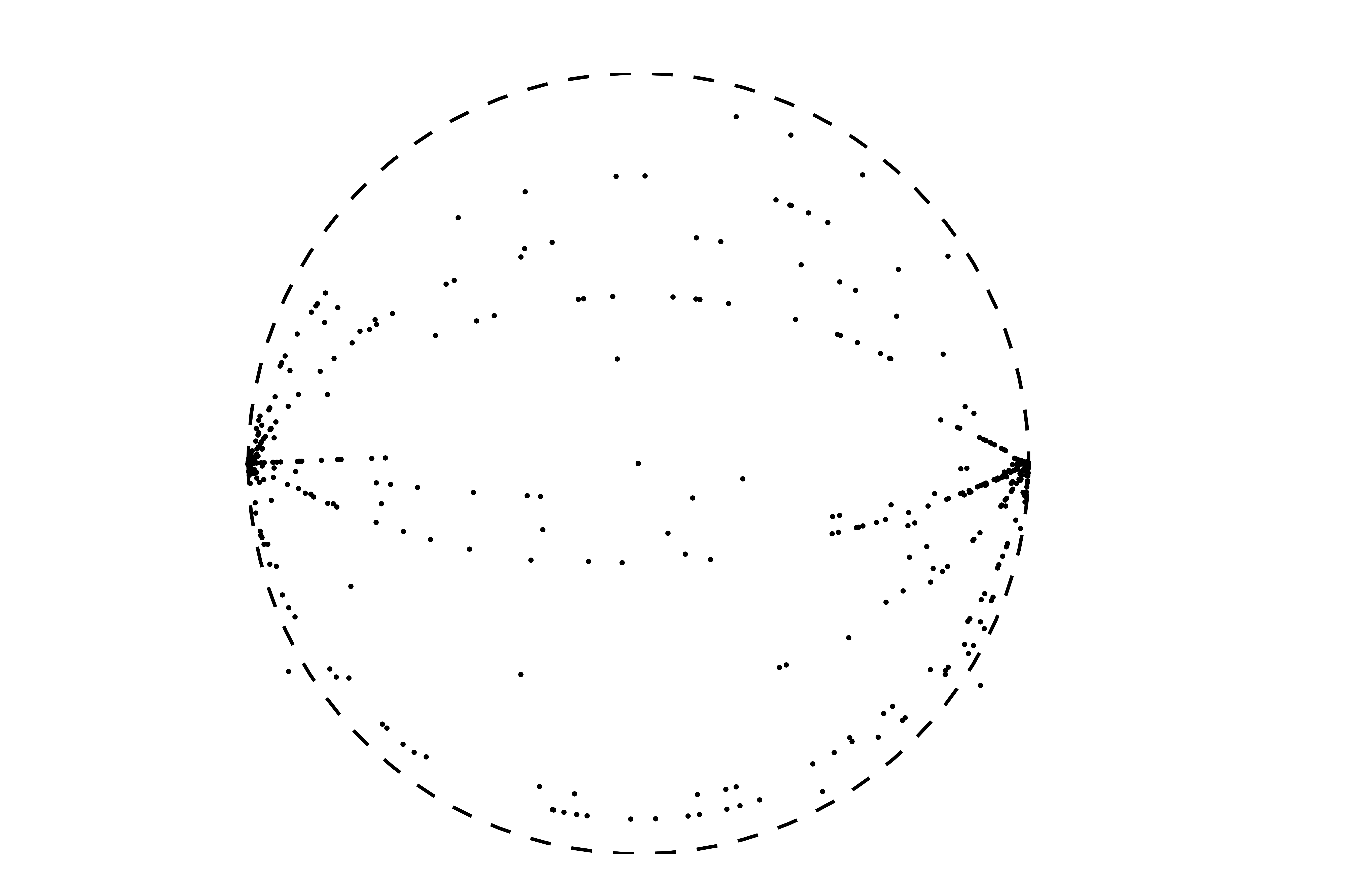} \\
(c) $p=0.9$
\end{tabular}
\caption{A realization of the random walk on Apollonian circles with $\alpha=1$ (300,000 steps). Here $x_n$ are i.i.d drawn from a triangular distribution on $(-1,1)$ with mode $0.1,$ and $\varsigma_0$ is distributed uniformly over $\bigr[-\frac{\pi}{2},\frac{\pi}{2}\bigr].$}
\label{fig4}
\end{figure}

\clearpage

\end{document}